\chardef\bslash=`\\ 
\newtheorem{thm}{Theorem}[section]
\newtheorem{cor}[thm]{Corollary}
\newtheorem{lem}[thm]{Lemma}
\newtheorem{prop}[thm]{Proposition}
\theoremstyle{definition}
\newtheorem{rem}[thm]{Remark}
\theoremstyle{remark}
\newcommand{\eval}[2][\right]{\relax
  \ifx#1\right\relax \left.\fi#2#1\rvert}
\begin{document}
\title{Shallow sections of the hypercube}

\author[L. Pournin]{Lionel Pournin}
\address{Universit{\'e} Paris 13, Villetaneuse, France}
\email{lionel.pournin@univ-paris13.fr}

\begin{abstract}
Consider a $d$-dimensional closed ball $B$ whose center coincides with that of the hypercube $[0,1]^d$. Pick the radius of $B$ in such a way that the vertices of the hypercube are outside of $B$ and the midpoints of its edges in the interior of $B$. It is known that, when $d\geq3$, the $(d-1)$-dimensional volume of $H\cap[0,1]^d$, where $H$ is a hyperplane of $\mathbb{R}^d$ tangent to $B$, is largest possible if and only if $H$ is orthogonal to a diagonal of the hypercube. It is shown here that the same holds when $d\geq5$ but the interior of $B$ is only required to contain the centers of the square faces of the hypercube.
\end{abstract}
\maketitle

\section{Introduction}\label{HS.sec.0}

Although many different polytopes can be obtained as the intersection of the hypercube $[0,1]^d$ with a hyperplane of $\mathbb{R}^d$, there exists an elegant general formula for their volume. This formula can be traced back to the work of George P{\'o}lya \cite{Polya1913} (see \cite{Berger2010,FrankRiede2012,KonigKoldobsky2011,Zong2006}, where it is further discussed). It also appears in the proof by Keith Ball \cite{Ball1986} of a conjecture of Douglas Hensley \cite{Hensley1979} on the largest possible volume for the intersection of the hypercube with a hyperplane through its center. This formula, an improper integral of a smooth function (see Theorem~\ref{HS.sec.1.thm.1} below), might look surprising at first because it does not explicitly reflect the non-smooth nature of the hypercube's boundary. 

Given a $d$-dimensional ball $B$ that shares its center with $[0,1]^d$, the problem posed in \cite{Hensley1979} can be generalized by asking for the largest (or smallest) possible $(d-1)$-dimensional volume of $H\cap[0,1]^d$, where $H$ is a hyperplane of $\mathbb{R}^d$ tangent to $B$. Of course, if $B$ contains $[0,1]^d$, the volume is always $0$, and it shall be required that the vertices of $[0,1]^d$ are outside of $B$. This question has first been studied in \cite{MoodyStoneZachZvavitch2013}, where it is shown that, if $d\geq3$ and $B$ contains the midpoints of the edges of $[0,1]^d$ in its interior, then the volume of $H\cap[0,1]^d$ is maximal if and only if $H$ is orthogonal to a diagonal of the hypercube. Here, the geometry of the hypercube plays an important role. Indeed, in this particular case, one of the closed half-spaces of $\mathbb{R}^d$ bounded by $H$ contains at most one vertex of $[0,1]^d$, which allows for a simpler formula that the one from \cite{Ball1986}. In fact, some of the hyperplanes tangent to $B$ might still possess this property even if the midpoints of the edges of $[0,1]^d$ are outside of the interior of $B$. This is the case if and only if the interior of $B$ contains the centers of the simplices spanned by the $d$ vertices of $[0,1]^d$ adjacent to any given vertex. Hermann K{\"o}nig proved that, if $d\geq5$ then for any such ball, the volume of $H\cap[0,1]^d$ is locally maximal when $H$ is orthogonal to a diagonal of the hypercube \cite{Konig2021}. These and similar questions have also been studied in the case of convex bodies other than the hypercube, as for instance the cross-polytope \cite{Konig2021,LiuTkocz2020}, the balls of the $q$-norms  where $2\leq{q}<\infty$ \cite{Koldobsky2005,MeyerPajor1988}, and the regular simplex \cite{Konig2021,Webb1996}.

Here, the result from \cite{MoodyStoneZachZvavitch2013} and, partially, the result from \cite{Konig2021}, are extended by showing that, if $d\geq5$ and $B$ is only required to contain the centers of the square faces of $[0,1]^d$ in its interior, then the volume of $H\cap[0,1]^d$ is maximal if and only if $H$ is orthogonal to a diagonal of the hypercube. 


This result requires to go beyond the case when $H$ separates a single vertex of $[0,1]^d$ from its other vertices. One of the ingredients of the proof is a volume formula for $H\cap[0,1]^d$ that, unlike the one mentioned above, takes the form of a sum over the vertices of $[0,1]^d$ contained in one of the half-spaces of $\mathbb{R}^d$ bounded by $H$ (see Theorem~\ref{HS.sec.1.thm.2} below). This formula is established in Section~\ref{HS.sec.1}. The regularity of the volume of $H\cap[0,1]^d$, as a function of a normal vector of $H$ is studied in the same section, based on the integral volume formula. The case when a unique vertex of $[0,1]^d$ lies in one of the half-spaces bounded by $H$ is treated in Section \ref{HS.sec.2}. The case when two vertices of $[0,1]^d$ lie in that half-space is treated in Section \ref{HS.sec.3} and the main result obtained as a consequence. The possible extension of that result to deeper sections of the hypercube is discussed in Section \ref{HS.sec.4}. Note in particular that, while the developed tools can still be used when $H$ cuts the hypercube in more complicated ways, the calculations require overcoming the exponential combinatorics of the hypercube. 


\section{Volume formulae for sections of the hypercube}\label{HS.sec.1}

The following theorem (see \cite{Ball1986,Berger2010,FrankRiede2012,KonigKoldobsky2011,Polya1913,Zong2006} for equivalent statements) provides the volume of a section of the hypercube by an arbitrary hyperplane. It is stated here for the hypercube $[-1,1]^d$ instead of $[0,1]^d$, as the formula is simpler in this case, but this will not make a difference later on.


\begin{thm}\label{HS.sec.1.thm.1}
Consider the hyperplane $H$ of $\mathbb{R}^d$ made up of the points $x$ satisfying $a\mathord{\cdot}x=b$, where $a$ is a vector from $(\mathbb{R}\mathord{\setminus}\{0\})^d$ and $b$ is a real number. If $d\geq2$, then the $(d-1)$-dimensional volume of $H\cap[-1,1]^d$ is
$$
\frac{2^{d-1}\|a\|}{\pi}\int_{-\infty}^{+\infty}\!\left(\prod_{i=1}^d\frac{\sin(a_iu)}{a_iu}\right)\!\cos(bu)du\mbox{.}
$$
\end{thm}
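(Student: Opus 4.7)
The plan is to derive the formula via Fourier inversion applied to the distribution of $a\mathord{\cdot}x$ when $x$ ranges uniformly over $[-1,1]^d$. Define the parallel-section function $g(t):=\mathrm{vol}_{d-1}(\{x\in[-1,1]^d:a\mathord{\cdot}x=t\})$ and set $h:=g/\|a\|$. An orthogonal change of coordinates placing $a/\|a\|$ as the last basis vector (equivalently, the coarea formula) yields
$$\int_{[-1,1]^d}\!\phi(a\mathord{\cdot}x)\,dx=\int_{-\infty}^{+\infty}\!\phi(t)h(t)\,dt$$
for every bounded measurable $\phi:\mathbb{R}\to\mathbb{R}$, so $h$ is the density of the pushforward of Lebesgue measure on $[-1,1]^d$ under the linear functional $x\mapsto a\mathord{\cdot}x$.

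Next, I would compute the Fourier transform of $h$ by exploiting the product structure of the cube:
$$\hat{h}(u)=\int_{[-1,1]^d}\!e^{-iu(a\mathord{\cdot}x)}\,dx=\prod_{i=1}^d\int_{-1}^{1}\!e^{-iua_ix_i}\,dx_i=2^d\prod_{i=1}^d\frac{\sin(a_iu)}{a_iu}\mbox{.}$$
Fourier inversion at the point $t=b$ then reads $h(b)=(2\pi)^{-1}\int\hat{h}(u)e^{ibu}\,du$. Since $\hat{h}$ is real-valued and even, the exponential can be replaced by $\cos(bu)$; multiplying through by $\|a\|$ recovers the stated formula for $g(b)=\mathrm{vol}_{d-1}(H\cap[-1,1]^d)$.

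The one step that needs care is the justification of pointwise Fourier inversion, and this is where the hypotheses of the theorem enter. Because every $a_i$ is nonzero and $d\geq 2$, the elementary bound $|\sin(a_iu)/(a_iu)|\leq\min(1,1/|a_iu|)$ applied coordinate by coordinate gives $|\hat{h}(u)|\leq C\min(1,|u|^{-d})$, so $\hat{h}\in L^1(\mathbb{R})$. Moreover, $g$ is continuous on $\mathbb{R}$; in fact it is a piecewise polynomial of degree $d-1$, with breakpoints among the values of $a\mathord{\cdot}v$ as $v$ ranges over the vertices of $[-1,1]^d$. Pointwise inversion therefore applies at $b$, and this completes the argument. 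The main obstacle, while not deep, is precisely this integrability estimate: it is what forces the two standing assumptions $d\geq2$ and $a\in(\mathbb{R}\mathord{\setminus}\{0\})^d$, and dropping either of them would make the improper integral in the statement diverge.
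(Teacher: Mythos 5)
The paper does not prove this theorem; it states it as a known result and cites P\'olya, Ball, and others, so there is no in-paper proof to compare against. Your argument is the standard Fourier-analytic derivation found in those references, and it is correct: the coarea identification of $g/\|a\|$ as the pushforward density, the product formula for $\hat h$, the bound $|\hat h(u)|\leq C\min(1,|u|^{-d})$ giving $\hat h\in L^1$ for $d\geq2$, and the continuity of $g$ (which does use $a\in(\mathbb{R}\setminus\{0\})^d$, since otherwise a facet of the cube could lie in a level set and $g$ would jump) together justify pointwise inversion at $t=b$. The only imprecision is the closing remark: for $d=1$ the improper integral does not diverge --- it converges conditionally as a Dirichlet-type integral --- rather, pointwise inversion fails at the jump points of $g$, where the integral returns the average of the one-sided limits; and if some $a_i=0$ the integrand is simply undefined rather than non-integrable. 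Neither point affects the validity of the proof.
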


An equivalent formula has been established by Rolfdieter Frank and Harald Riede \cite{FrankRiede2012} that takes the form of a (finite) sum over the vertices of the hypercube instead of an improper integral. Interestingly, this formula can be simplified into a sum over only the vertices of the hypercube that are contained in one of the half-spaces bounded by the considered hyperplane, as shown in \cite{DezaHiriart-UrrutyPournin2021} for the special case when the hyperplane is orthogonal to the diagonals of the hypercube. It is noteworthy that in this special case, one finds among the resulting intersections a collection of polytopes called the hypersimplices \cite{GabrielovGelfandLosik1975,GelfandGoreskyMacPhersonSerganova1987} whose volume is proportional to the Eulerian numbers \cite{Laplace1886,Stanley1977}. It turns out that the formula from \cite{FrankRiede2012} can be simplified in general as a straightforward consequence of a result by David Barrow and Philip Smith \cite{BarrowSmith1979} that provides the volume of the intersection of the hypercube with one of the half-spaces bounded by the hyperplane. In the sequel, $\sigma(x)$ denotes the sum of the coordinates of a point $x$ from $\mathbb{R}^d$ and $\pi(x)$ denotes their product.

\begin{thm}\label{HS.sec.1.thm.2}
Consider the hyperplane $H$ of $\mathbb{R}^d$ made up of the points $x$ satisfying $a\mathord{\cdot}x=b$, where $a$ is a vector from $(\mathbb{R}\mathord{\setminus}\{0\})^d$ and $b$ is a real number. The $(d-1)$-dimensional volume of $H\cap[0,1]^d$ is
\begin{equation}\label{HS.sec.1.thm.2.eq.0}
\sum\frac{(-1)^{\sigma(v)}\|a\|(b-a\mathord{\cdot}v)^{d-1}}{(d-1)!\pi(a)}\mbox{,}
\end{equation}
where the sum is over the vertices $v$ of $[0,1]^d$ such that $a\mathord{\cdot}v\leq{b}$.
\end{thm}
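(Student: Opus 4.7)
The plan is to derive \eqref{HS.sec.1.thm.2.eq.0} directly from the Barrow--Smith formula \cite{BarrowSmith1979} for the $d$-dimensional volume of $[0,1]^d\cap\{x\in\mathbb{R}^d:a\mathord{\cdot}x\leq b\}$, by differentiating in $b$ and invoking Cavalieri's principle. Compared with the route that begins with the formula of Frank and Riede \cite{FrankRiede2012} (a sum over all $2^d$ vertices of $[0,1]^d$) and then collapses it to a sum over half of them by symmetry, this approach produces the desired one-sided sum directly and avoids having to exhibit a cancellation.

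Denote by $V(b)$ the $d$-dimensional volume of $[0,1]^d\cap\{x:a\mathord{\cdot}x\leq b\}$. I would first quote the Barrow--Smith identity
$$
V(b)=\sum\frac{(-1)^{\sigma(v)}(b-a\mathord{\cdot}v)^{d}}{d!\,\pi(a)}\mbox{,}
$$
with the sum indexed by the vertices $v$ of $[0,1]^d$ such that $a\mathord{\cdot}v\leq b$. On the other hand, Cavalieri's principle applied to the family $\{a\mathord{\cdot}x=t\}$ of parallel hyperplanes gives
$$
V'(b)=\frac{\mathrm{Vol}_{d-1}(H\cap[0,1]^d)}{\|a\|}\mbox{,}
$$
because an increment $db$ in $b$ shifts the hyperplane $a\mathord{\cdot}x=b$ by $db/\|a\|$ along its unit normal $a/\|a\|$.

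Differentiating the Barrow--Smith expression term by term then yields
$$
V'(b)=\sum\frac{(-1)^{\sigma(v)}(b-a\mathord{\cdot}v)^{d-1}}{(d-1)!\,\pi(a)}\mbox{,}
$$
still summed over vertices with $a\mathord{\cdot}v\leq b$, and multiplication by $\|a\|$ reproduces \eqref{HS.sec.1.thm.2.eq.0}. The only point requiring care will be to justify that $V$ is genuinely differentiable at the exceptional values of $b$ equal to some $a\mathord{\cdot}v$. Here the hypothesis $a\in(\mathbb{R}\mathord{\setminus}\{0\})^d$ plays its role: each summand $(b-a\mathord{\cdot}v)^{d}$ is a polynomial that vanishes together with its first $d-1$ derivatives at $b=a\mathord{\cdot}v$, so $V$ is in fact of class $C^{d-1}$ on $\mathbb{R}$ and no boundary contribution is produced as a vertex $v$ enters or leaves the index set when $b$ varies. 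Once that regularity point is recorded, the argument reduces to routine bookkeeping.
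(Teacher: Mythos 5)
Your proposal is correct and follows essentially the same route as the paper: both quote the Barrow--Smith formula for the volume of $[0,1]^d\cap\{a\mathord{\cdot}x\leq b\}$ and differentiate it to recover the slice volume, the only (immaterial) difference being that the paper differentiates with respect to the signed distance $t$ from the center, where your chain-rule factor $\|a\|$ is already absorbed into $b=\sigma(a)/2\pm t\|a\|$. Your remark that each summand vanishes to order $d$ at $b=a\mathord{\cdot}v$, so that no boundary terms appear as vertices enter or leave the index set, is a point the paper leaves implicit and is a welcome addition.
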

\begin{proof}
Denote by $H^-$ the half-space of $\mathbb{R}^d$ made up of the points $x$ such that $a\mathord{\cdot}x\leq{b}$. It is proven in \cite{BarrowSmith1979} (see equation (2) therein) that the $d$-dimensional volume of $H^-\cap[0,1]^d$ is given by the expression
\begin{equation}\label{HS.sec.1.thm.2.eq.1}
\sum\frac{(-1)^{\sigma(v)}(b-a\mathord{\cdot}v)^d}{d!\pi(a)}\mbox{,}
\end{equation}
where the sum is over the vertices $v$ of $[0,1]^d$ such that $a\mathord{\cdot}v\leq{b}$. Now observe that the distance $t$ between $H$ and the center of $[0,1]^d$ is
$$
t=\frac{|b-\sigma(a)/2|}{\|a\|}\mbox{.}
$$

Assume for a moment that $b$ is at least $\sigma(a)/2$ or, equivalently, that the center of $[0,1]^d$ is contained in $H^-$. In this case,
$$
b=\frac{\sigma(a)}{2}+t\|a\|\mbox{.}
$$

Replacing $b$ in (\ref{HS.sec.1.thm.2.eq.1}) by the right-hand side of this equality and differentiating with respect to $t$ results in the $(d-1)$-dimensional volume of $H\cap[0,1]^d$. The expression that results from the differentiation is (\ref{HS.sec.1.thm.2.eq.0}), as desired.

Now if $b$ is less than $\sigma(a)/2$, then
$$
b=\frac{\sigma(a)}{2}-t\|a\|\mbox{.}
$$

As above, replacing $b$ in (\ref{HS.sec.1.thm.2.eq.1}) by the right-hand side of this equality and differentiating with respect to $-t$ results in (\ref{HS.sec.1.thm.2.eq.0}).
\end{proof}

\begin{rem}
Proposition 4.1 from \cite{Konig2021} is the special case of Theorem \ref{HS.sec.1.thm.2} when the hyperplane separates one vertex of the hypercube from all the others.
\end{rem}

Recall that we are interested in the hyperplanes of $\mathbb{R}^d$ tangent to a fixed ball $B$ whose center coincides with that of the hypercube $[0,1]^d$. Little mention is made of $B$ in the sequel, and it will be most of the time implicitly represented by its radius $t$. In other words, $t$ is the distance between the hyperplane and the center of $[0,1]^d$.  It will be useful to keep in mind that the interior of $B$ contains the centers of the $k$-dimensional faces of $[0,1]^d$ if and only if $t>\sqrt{d-k}/2$.

For any vector $a$ in $[0,+\infty[^d$, denote
\begin{equation}\label{HS.sec.1.eq.2}
b=\frac{1}{2}\sum_{i=1}^da_i-t\mbox{.}
\end{equation}

Thereafter, this quantity is thought of as a function of $a$ but, for the sake of simplicity it is denoted by $b$ instead of $b(a)$. Further denote by $H$ the hyperplane made up of the points $x$ satisfying $a\mathord{\cdot}x=b$, that is also thought of as depending on $a$. By construction, when $a$ belongs to the unit sphere $\mathbb{S}^{d-1}$ centered at the origin of $\mathbb{R}^d$, the distance between $H$ and the center of $[0,1]^d$ is precisely $t$. In fact, up to the symmetries of $[0,1]^d$, any hyperplane of $\mathbb{R}^d$ at distance $t$ from the center of $[0,1]^d$ coincides with $H$ for some vector $a$ from $\mathbb{S}^{d-1}\cap[0,+\infty[^d$. It follows from Theorem \ref{HS.sec.1.thm.1} that the $(d-1)$-dimensional volume of $H\cap[0,1]^d$ is a smooth function of $a$ on $]0,+\infty[^d$ as soon as $d\geq3$.

\begin{cor}\label{HS.sec.1.cor.1}
If $d\geq3$, then the $(d-1)$-dimensional volume of $H\cap[0,1]^d$ is a continuously differentiable function of $a$ on the open orthant $]0,+\infty[^d$.
\end{cor}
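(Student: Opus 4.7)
The plan is to derive an explicit integral formula for the volume from \thmref{HS.sec.1.thm.1}, then apply the classical theorem on differentiation under the integral sign.

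First, I would pass from $[0,1]^d$ to $[-1,1]^d$ via the dilation $y=2x-(1,\ldots,1)$. This map sends $H$ onto the hyperplane of equation $a\mathord{\cdot}y=2b-\sigma(a)$, and by (\ref{HS.sec.1.eq.2}) the right-hand side simplifies to $-2t$, independent of $a$. Since $(d-1)$-dimensional volumes scale by $2^{d-1}$ under this dilation, \thmref{HS.sec.1.thm.1} (together with the parity of $\cos$) identifies the $(d-1)$-dimensional volume of $H\cap[0,1]^d$ with
\begin{equation*}
\frac{\|a\|}{\pi}\int_{-\infty}^{+\infty}\left(\prod_{i=1}^d\frac{\sin(a_iu)}{a_iu}\right)\cos(2tu)\,du\mbox{.}
\end{equation*}

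As $\|a\|/\pi$ is smooth on $]0,+\infty[^d$, what remains is to show that the above integral is a continuously differentiable function of $a$ there. I would fix a compact set $K\subset{]0,+\infty[^d}$ together with a uniform lower bound $\alpha>0$ on the coordinates of its points. The inequality $|\sin(s)/s|\leq\min(1,1/|s|)$ then bounds the integrand uniformly in $a\in K$ by $1$ for $|u|\leq1$ and by $(\alpha|u|)^{-d}$ for $|u|\geq1$, giving absolute and uniform convergence on $K$. Differentiating one factor $\sin(a_ku)/(a_ku)$ with respect to $a_k$ produces $\cos(a_ku)/a_k-\sin(a_ku)/(a_k^2u)$, which is bounded uniformly on $K\times\mathbb{R}$, while the remaining $d-1$ factors of the same form still contribute decay of order $|u|^{-(d-1)}$ at infinity.

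The only point that genuinely uses the hypothesis $d\geq3$, and is therefore the main obstacle, is the integrability at infinity of this dominating function: $|u|^{-(d-1)}$ is integrable precisely when $d-1\geq2$, which is also why the argument cannot be pushed down to $d=2$ as stated. Granted the dominated bound, the Leibniz rule for improper integrals yields the existence of the partial derivatives, and dominated convergence yields their continuity in $a$.
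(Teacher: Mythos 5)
Your proposal is correct and follows essentially the same route as the paper: reduce to the integral formula with the cosine term independent of $a$, dominate the differentiated integrand by a constant times $\min\{1,|u|^{-(d-1)}\}$ on a compact subset of the open orthant, and use $d\geq3$ for integrability at infinity to justify differentiation under the integral sign. The only cosmetic differences are that you carry out the dilation explicitly (obtaining $\cos(2tu)$ where the paper writes $\cos(tu)$) and phrase the uniform convergence via a dominating function rather than Cauchy's criterion.
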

\begin{proof}
Consider the volume formula provided by Theorem \ref{HS.sec.1.thm.1} and observe the $(d-1)$-dimensional volume of $H\cap[0,1]^d$ can be obtained from it by a straightforward change of variables. Further observe that $\|a\|$ is a continuously derivable function of $a$ on $]0,+\infty[^d$. Hence, by the symmetry of this formula with respect to the coordinates of $a$, it is sufficient to show that the partial derivative
\begin{equation}\label{HS.sec.1.cor.1.eq.1}
\frac{\partial}{\partial{a_1}}\int_{-\infty}^{+\infty}\!\left(\prod_{i=1}^d\frac{\sin(a_iu)}{a_iu}\right)\!\cos(tu)du
\end{equation}
exists and is a continuous function of $a$ on $]0,+\infty[^d$. It is a consequence of (\ref{HS.sec.1.eq.2}) that the cosine term is no longer dependent of $a$ after the change of variables (see also Proposition 1 from \cite{KonigKoldobsky2011}). The differentiability is obtained by Leibniz's rule for differentiation under the integral sign whereby
$$
\frac{\partial}{\partial{a_1}}\int_{-\infty}^{+\infty}\!\left(\prod_{i=1}^d\frac{\sin(a_iu)}{a_iu}\right)\!\cos(tu)du=\int_{-\infty}^{+\infty}\frac{\partial}{\partial{a_1}}\!\left(\prod_{i=1}^d\frac{\sin(a_iu)}{a_iu}\right)\!\cos(tu)du\mbox{,}
$$

However, as the integral is improper, Leibniz's rule requires that
$$
\int_{-n}^{+n}\frac{\partial}{\partial{a_1}}\!\left(\prod_{i=1}^d\frac{\sin(a_iu)}{a_iu}\right)\!\cos(tu)du
$$
converges uniformly when $n$ goes to infinity, in a neighborhood of each point $a$ from $]0,+\infty[^d$. The uniform convergence also implies the desired continuity of the partial derivative. Observe that any point contained in $]0,+\infty[^d$ admits, as a neighborhood, the interior of a closed ball contained in the open orthant $]0,+\infty[^d$. Consider such a closed ball $B$ and assume that $a$ belongs to the interior of $B$. Note that, when $u$ is non-zero,
\begin{equation}\label{HS.sec.1.cor.1.eq.2}
\frac{\partial}{\partial{a_1}}\!\left(\prod_{i=1}^d\frac{\sin(a_iu)}{a_iu}\right)\!\cos(tu)=\!\left(\prod_{i=2}^d\frac{\sin(a_iu)}{a_iu}\right)\!\cos(tu)\frac{\partial}{\partial{a_1}}\frac{\sin(a_1u)}{a_1u}\mbox{.}
\end{equation}

By a straightforward calculation,
$$
\frac{\partial}{\partial{a_1}}\frac{\sin(a_1u)}{a_1u}=\frac{\cos(a_1u)}{a_1}-\frac{\sin(a_1u)}{a_1^2u}\mbox{.}
$$

As a consequence,
\begin{equation}\label{HS.sec.1.cor.1.eq.3}
\left|\frac{\partial}{\partial{a_1}}\!\left(\prod_{i=1}^d\frac{\sin(a_iu)}{a_iu}\right)\!\cos(bu)\right|\leq2\min\!\left\{1,\frac{1}{mu^{d-1}}\right\}\!\mbox{,}
\end{equation}
where $m$ is the smallest possible value of the product of the coordinates of a point from $B$. Under the assumption that $d$ is at least $3$, the desired uniform convergence property therefore follows from Cauchy's criterion.
\end{proof}

\begin{rem}
If one looks at the $2$-dimensional situation, Corollary \ref{HS.sec.1.cor.1} might seem surprising. Indeed, when $d=2$ and $0\leq{t}<1/2$, the partial derivatives of the volume of $H\cap[0,1]^2$ are not continuous at the vectors $a$ from $]0,+\infty[^2$ with a coordinate equal to $b$. Geometrically, this corresponds to the case when $H$ contains a vertex of the square $[0,1]^2$. At such vectors, Theorem \ref{HS.sec.1.thm.2} provides two expressions for the partial derivatives of the volume of $H\cap[0,1]^2$. One of these expressions is always negative and the other always positive.
\end{rem}

Another consequence of Theorem \ref{HS.sec.1.thm.1} is that the $(d-1)$-dimensional volume of $H\cap[0,1]^d$ is a continuous function of $a$ almost everywhere on the boundary of the closed positive orthant $[0,+\infty[^d$ when $d\geq3$.

\begin{cor}\label{HS.sec.1.cor.2}
The $(d-1)$-dimensional volume of $H\cap[0,1]^d$ is a continuous function of $a$ at any point contained in the closed orthant $[0,+\infty[^d$ that admits at least two non-zero coordinates.
\end{cor}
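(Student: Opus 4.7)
The plan is to argue continuity directly from the integral formula of Theorem~\ref{HS.sec.1.thm.1}, which, after the change of variables used in the proof of Corollary~\ref{HS.sec.1.cor.1}, expresses the $(d-1)$-dimensional volume of $H\cap[0,1]^d$ as the continuous factor $\|a\|$ multiplied by an improper integral whose integrand depends on $a$ only through the product of sinc factors $\sin(a_iu)/(a_iu)$; the cosine factor involves only $t$. Each such sinc factor extends continuously in $a_i$ to the value $1$ at $a_i=0$, so for every fixed $u$ the integrand is a continuous function of $a$ on the whole closed orthant $[0,+\infty[^d$. Only the exchange of limit and integral requires attention.

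To handle that exchange at a point $a^\star$ of $[0,+\infty[^d$ with at least two non-zero coordinates $a_i^\star$ and $a_j^\star$, I would first choose a closed neighborhood $N$ of $a^\star$ in $[0,+\infty[^d$ small enough that $a_i\geq a_i^\star/2$ and $a_j\geq a_j^\star/2$ for every $a$ in $N$. Using the bound $|\sin(x)/x|\leq\min\{1,1/|x|\}$ on the $i$-th and $j$-th factors, and $|\sin(a_ku)/(a_ku)|\leq1$ on the remaining ones, the modulus of the integrand on $N$ is majorized by
$$
\min\!\left\{1,\frac{4}{a_i^\star a_j^\star u^2}\right\}\!\mbox{,}
$$
a function of $u$ alone that is integrable over $\mathbb{R}$. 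The dominated convergence theorem then gives the continuity of the integral, and hence of the volume, as a function of $a$ at $a^\star$.

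The argument is essentially a weaker analogue of the uniform-convergence estimate (\ref{HS.sec.1.cor.1.eq.3}) in the proof of Corollary~\ref{HS.sec.1.cor.1}, and the main thing to check is simply that two coordinates uniformly bounded away from $0$ on $N$ already produce the $1/u^2$ tail decay needed for the dominating function to be integrable. No obstacle beyond this bookkeeping is expected; note in passing that the hypothesis of two non-zero coordinates is sharp, since with only one non-zero coordinate the integrand decays at most like $1/|u|$ and, geometrically, $H\cap[0,1]^d$ degenerates up to symmetry into a facet-parallel section whose volume is discontinuous in $a$.
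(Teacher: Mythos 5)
Your argument is correct and takes essentially the same route as the paper: both exploit the two coordinates bounded away from zero on a small neighborhood to dominate the integrand by an integrable function of the form $\min\{1,C/u^2\}$, the only difference being that you invoke dominated convergence where the paper establishes uniform convergence of the truncated integrals via the Cauchy criterion, which amounts to the same thing here. The one point you leave implicit (and which the paper does address, if only by a closing remark) is that the continuously extended integral, under the convention $\sin(0)/0=1$, still equals the volume of $H\cap[0,1]^d$ at boundary points where a coordinate of $a$ vanishes, since Theorem~\ref{HS.sec.1.thm.1} is stated only for vectors with all coordinates non-zero.
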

\begin{proof}
Consider a point $x$ from $[0,+\infty[^d$ with at least two non-zero coordinates. 
Assume, without loss of generality, that the first two coordinates of $x$ are non-zero. Consider a closed $d$-dimensional ball $B$ centered at $x$, whose interior is non-empty. Choose the radius of $B$ small enough so that the first two coordinates of all the points it contains are non-zero. Observe that
$$
\int_{-n}^n\!\left(\prod_{i=1}^d\frac{\sin(a_iu)}{a_iu}\right)\!\cos(tu)du
$$
is a continuous function of $a$ for any positive integer $n$ as a definite integral of a continuous function of $a$ and $u$ (under the convention that $\sin(0)/0=1$). In fact, that function converges uniformly on $B$ when $n$ goes to infinity. Indeed, for any non-zero real number $u$ and any vector $a$ from $B$,
$$
\left|\left(\prod_{i=1}^d\frac{\sin(a_iu)}{a_iu}\right)\!\cos(tu)\right|\leq\!\min\left\{1,\frac{1}{mu^2}\right\}\!\mbox{,}
$$
where $m$ is the smallest value for the product of the first two coordinates of a point from $B$. Hence, by the Cauchy criterion, the desired uniform convergence property holds. As a consequence,
$$
\frac{\|a\|}{\pi}\int_{-\infty}^\infty\!\left(\prod_{i=1}^d\frac{\sin(a_iu)}{a_iu}\right)\!\cos(tu)du
$$
is a continuous function of $a$ on $B$. It remains to remark that, according to Theorem \ref{HS.sec.1.thm.2}, the value of this function at any point $a$ from $B$ (still under the convention that $\sin(0)/0=1$, and by the same change of variables as in the proof of Corollary \ref{HS.sec.1.cor.1}) is precisely the volume of $H\cap[0,1]^d$.
\end{proof}

\begin{rem}
While Corollary \ref{HS.sec.1.cor.2} is valid in $2$ dimensions, it merely states in this case that the length of the segment $H\cap[0,1]^2$ is a continuous function of $a$ in the open quarter plane $]0,+\infty[^2$. In fact, that length is not always continuous on the boundary of the closed quarter plane. For instance, if $t=1/2$, it can be easily seen that $H\cap[0,1]^2$ has length $1$ when $a$ has a unique non-zero coordinate and length at most $1/\sqrt{2}$ when both coordinates of $a$ are positive. This corresponds to the limit case when the hyperplane $H$ is tangent to the ball $B$ inscribed in the hypercube $[0,1]^d$. That limit case is particularly interesting: as shown in \cite{KonigKoldobsky2011}, in this case the volume of $H\cap[0,1]^d$ is minimal when $H$ is orthogonal to a diagonal of $[0,1]^d$, in dimensions $2$ and $3$. It is asked in \cite{Konig2021} whether the same holds in higher dimensions. These observations suggest that, as $B$ shrinks from a ball that contains the vertices of the hypercube to the ball inscribed in it, the $(d-1)$-dimensional volume of the intersection between the hypercube and the hyperplanes tangent to $B$ exhibits dramatically different behaviors.
\end{rem}

\begin{prop}\label{HS.sec.1.prop.1}
If $t>1/2$ then the $(d-1)$-dimensional volume of $H\cap[0,1]^d$, as a function of $a$ defined on $\mathbb{S}^{d-1}\cap[0,+\infty[^d$ admits a maximum. 
\end{prop}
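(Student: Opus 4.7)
The plan is to exploit compactness of $\mathbb{S}^{d-1}\cap[0,+\infty[^d$ together with Corollary \ref{HS.sec.1.cor.2}, after cutting out the offending basis vectors by observing that on a neighborhood of each of them the volume is simply zero.

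First I would unpack what it means for $H$ to meet $[0,1]^d$. Since $a\in[0,+\infty[^d$, for any $x\in[0,1]^d$ we have $0\leq a\mathord{\cdot}x\leq\sigma(a)$, so the intersection $H\cap[0,1]^d$ is empty whenever $b<0$, i.e.\ whenever $\sigma(a)<2t$ by (\ref{HS.sec.1.eq.2}). In particular, the volume of $H\cap[0,1]^d$ vanishes on the open set
$$
U=\{a\in\mathbb{S}^{d-1}\cap[0,+\infty[^d:\sigma(a)<2t\}.
$$

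Next I would single out the compact set
$$
K=\{a\in\mathbb{S}^{d-1}\cap[0,+\infty[^d:\sigma(a)\geq2t\},
$$
which is closed in $\mathbb{S}^{d-1}\cap[0,+\infty[^d$ and hence compact. The crucial use of the hypothesis $t>1/2$ is here: since $2t>1$, and since any unit vector $a\in[0,+\infty[^d$ with a single non-zero coordinate satisfies $\sigma(a)=1$, every $a\in K$ must have at least two non-zero coordinates. By Corollary \ref{HS.sec.1.cor.2}, the $(d-1)$-dimensional volume of $H\cap[0,1]^d$ is therefore a continuous function of $a$ on $K$.

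Continuity on a compact set then yields a maximum on $K$, which is non-negative. Since the volume is identically zero on the complementary set $U$, this maximum is also a maximum over the whole of $\mathbb{S}^{d-1}\cap[0,+\infty[^d$, which proves the proposition. The only delicate point in the argument is verifying that $K$ avoids the basis vectors, where Corollary \ref{HS.sec.1.cor.2} would fail; this is precisely what the strict inequality $t>1/2$ is there to guarantee, and it is reassuring to note that on the boundary of $K$, where $b=0$ and $H$ passes through the origin, the intersection $H\cap[0,1]^d$ reduces to the single point $0$, so the volume really does vanish continuously as one exits $K$.
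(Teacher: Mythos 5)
Your proof is correct and takes essentially the same route as the paper's: both arguments rest on the volume vanishing near the basis vectors because $t>1/2$, on Corollary \ref{HS.sec.1.cor.2} at points with at least two non-zero coordinates, and on compactness. The only cosmetic difference is that you restrict attention to the compact set $K$ where continuity is already known (your explicit criterion $\sigma(a)<2t$ nicely makes precise the paper's observation that $H$ stays disjoint from the hypercube in a neighborhood of each basis vector), whereas the paper instead upgrades to continuity on all of $\mathbb{S}^{d-1}\cap[0,+\infty[^d$ before invoking compactness.
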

\begin{proof}
Assume that $t>1/2$. By that assumption, $H$ is disjoint from $[0,1]^d$ when $a$ has a unique non-zero coordinate. Now observe that $H$ remains disjoint from $[0,1]^d$ in a neighborhood of any such vector $a$. As a consequence, the $(d-1)$-dimensional volume of $H\cap[0,1]^d$ is equal to $0$ in neighborhood of $a$. This, together with Corollary \ref{HS.sec.1.cor.2}, proves that the $(d-1)$-dimensional volume of $H\cap[0,1]^d$ is continuous on the whole of $\mathbb{S}^{d-1}\cap[0,+\infty[^d$. The result therefore  follows from the compactness of $\mathbb{S}^{d-1}\cap[0,+\infty[^d$.
\end{proof}

\section{Cutting the corners of the hypercube}\label{HS.sec.2}

Recall that $a$ is a non-zero vector from $[0,+\infty[^d$, $b$ the affine function of $a$ defined by (\ref{HS.sec.1.eq.2}) and $H$ the hyperplane of $\mathbb{R}^d$ made up of the points $x$ such that $a\mathord{\cdot}x=b$. From now on, the $(d-1)$-dimensional volume of $H\cap[0,1]^d$ will be denoted by $V$ and, just as $b$ and $H$, it is treated as a function of $a$. For any number $\lambda$, consider the Lagrangian function
\begin{equation}\label{HS.sec.2.eq.1}
L_\lambda=\frac{V}{\|a\|}+\lambda\!\left(\|a\|^2-1\right)
\end{equation}

It is an immediate consequence of Corollary \ref{HS.sec.1.cor.1} that $L_\lambda$ is a continuously differentiable function of $a$ on the open orthant $]0,+\infty[^d$. Recall that a critical point of $L_\lambda$ is a vector $a$ in $]0,+\infty[^d$ such that, at $a$,
\begin{equation}\label{HS.sec.2.eq.2}
\frac{\partial{L_\lambda}}{\partial{a_i}}=0
\end{equation}
for every integer $i$ satisfying $1\leq{i}\leq{d}$,

\begin{lem}\label{HS.sec.2.lem.1}
Consider a vector $a$ from $\mathbb{S}^{d-1}\cap[0,+\infty[^d$ and assume that the maximum of $V$ on $\mathbb{S}^{d-1}\cap[0,+\infty[^d$ is attained at $a$. If in addition,
\begin{equation}\label{HS.sec.2.lem.1.eq.1}
\frac{\sqrt{d-1}}{2}<t<\frac{\sqrt{d}}{2}\mbox{,}
\end{equation}
then $a$ belongs to the open orthant $]0,+\infty[^d$, $b$ is positive at $a$, and there exists a number $\lambda$ such that $a$ is a critical point of $L_\lambda$.
\end{lem}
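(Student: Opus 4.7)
The strategy is to show in turn that the maximum of $V$ on $\mathbb{S}^{d-1}\cap[0,+\infty[^d$ is attained only in the interior of the orthant, that $b$ must be positive there, and finally to extract the Lagrange multiplier condition by standard calculus. Since $t>\sqrt{d-1}/2\geq 1/2$, Proposition~\ref{HS.sec.1.prop.1} guarantees that $V$ attains its maximum on the compact set $\mathbb{S}^{d-1}\cap[0,+\infty[^d$. I would first check that this maximum is positive by evaluating $V$ at the diagonal direction $a^\star=(1,\dots,1)/\sqrt{d}$: there $\sigma(a^\star)=\sqrt{d}$, the corresponding value of $b$ is $\sqrt{d}/2-t$, and the hypothesis $t<\sqrt{d}/2$ places $b$ strictly between $0$ and $\sigma(a^\star)$. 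The hyperplane $H$ therefore meets the interior of $[0,1]^d$, so $V(a^\star)>0$.

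Next, I would show that $V$ vanishes at every point of $\mathbb{S}^{d-1}\cap[0,+\infty[^d$ with a zero coordinate, which forces the maximizer $a$ into the open orthant. For such a point $a'$, Cauchy--Schwarz applied to its at most $d-1$ nonzero entries yields
\begin{equation*}
\sigma(a')\leq\sqrt{d-1}\,\|a'\|=\sqrt{d-1}<2t\mbox{,}
\end{equation*}
so the associated value $b'=\sigma(a')/2-t$ is strictly negative. Since $a'\cdot x\geq 0$ for every $x\in[0,1]^d$, the hyperplane $a'\cdot x=b'$ is disjoint from $[0,1]^d$ and $V(a')=0$; this is incompatible with the maximum being positive, so $a$ must lie in $]0,+\infty[^d$.

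Once that is established, the range of $a\cdot x$ on $[0,1]^d$ is exactly $[0,\sigma(a)]$ because all coordinates of $a$ are positive, so $V(a)>0$ forces $H$ to meet the interior of the hypercube, which amounts to $0<b<\sigma(a)$ and in particular gives $b>0$. For the Lagrange condition, Corollary~\ref{HS.sec.1.cor.1} makes $V$ and hence $V/\|a\|$ continuously differentiable on $]0,+\infty[^d$, and the constraint $\|a\|^2-1=0$ has nonvanishing gradient $2a$. The classical Lagrange multiplier theorem therefore supplies a real number $\lambda$ with $\nabla(V/\|a\|)+2\lambda a=0$ at $a$, which is exactly the statement that $a$ is a critical point of $L_\lambda$. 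I expect the main technical obstacle to lie in the second step, where the full strength of $t>\sqrt{d-1}/2$ enters through the Cauchy--Schwarz estimate that rules out boundary maximizers; the remaining steps reduce to routine appeals to Proposition~\ref{HS.sec.1.prop.1}, Corollary~\ref{HS.sec.1.cor.1}, and standard Lagrange multiplier calculus.
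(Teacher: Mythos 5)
Your proposal is correct and follows the same route as the paper: positivity of the maximum is witnessed by the diagonal direction, boundary points of the orthant are excluded because there $b<0$ and $H$ misses the cube (your Cauchy--Schwarz bound $\sigma(a')\leq\sqrt{d-1}<2t$ is just the algebraic form of the paper's observation that $t$ exceeds the distance from the center to an edge midpoint), positivity of $b$ follows from $V>0$, and the multiplier comes from the standard Lagrange theorem with the nonvanishing constraint gradient. No gaps; the appeal to Proposition~\ref{HS.sec.1.prop.1} for existence of the maximum is redundant since the lemma already assumes it, but harmless.
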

\begin{proof}
Under the assumption that (\ref{HS.sec.2.lem.1.eq.1}) holds, $V$ must be positive at $a$. Indeed, according to (\ref{HS.sec.2.lem.1.eq.1}), the distance between the center $c$ of $[0,1]^d$ and $H$ is less than the distance between $c$ and a vertex of $[0,1]^d$ and, therefore, $V$ is sometimes positive (as, for instance when all the coordinates of $a$ coincide). Since $V$ attains its maximum on $\mathbb{S}^{d-1}\cap[0,+\infty[^d$ at $a$, it is necessarily positive at $a$. 

Now observe that (\ref{HS.sec.2.lem.1.eq.1}) also implies that the distance between $c$ and $H$ is greater than the distance between $c$ and the midpoint of the edges of $[0,1]^d$. Hence, if a coordinate of $a$ were equal to $0$, then $H$ and $[0,1]^d$ would be disjoint and $V$ would be equal to $0$. Therefore $a$ necessarily belongs to $]0,+\infty[^d$. In turn, $b$ must be positive at $a$. Indeed, otherwise $H$ and $[0,1]^d$ would either be disjoint (in the case when $b<0$), or intersect precisely at the origin of $\mathbb{R}^d$ (in the case when $b=0$). In both cases, $V$ would be equal to $0$ at $a$.

Finally, observe that the gradient of
$$
a\rightarrow\|a\|^2-1
$$
is non-zero in $\mathbb{R}^d\mathord{\setminus}\{0\}$. Hence, as $a$ belongs to $]0,+\infty[^d$, it must be a critical point of $L_\lambda$ for some real number $\lambda$ by the Lagrange multipliers theorem.
\end{proof}

\begin{lem}\label{HS.sec.2.lem.2}
Consider a critical point $a$ of $L_\lambda$ contained in $\mathbb{S}^{d-1}\cap]0,+\infty[^d$. Assume that $t<\sqrt{d}/2$ and that either 
\begin{enumerate}
\item[(i)]  $3\leq{d}\leq4$ and $t>\sqrt{d-1}/2$, or
\item[(ii)] $d\geq5$ and $t>\sqrt{d}/2-1/\sqrt{d}$.
\end{enumerate}

If $b$ is positive at $a$ and all the coordinates of $a$ are greater than or equal to $b$, then all of these coordinates necessarily coincide.
\end{lem}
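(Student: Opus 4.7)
The plan is to collapse the critical-point equations into a common quadratic whose roots list every possible value of $a_k$, and then use the normalization together with the constraint $a_k\geq b$ to force both roots to coincide. Since $b>0$ and every coordinate satisfies $a_k\geq b$, the origin is the only vertex of $[0,1]^d$ lying in the half-space $\{x:a\cdot x\leq b\}$, so Theorem~\ref{HS.sec.1.thm.2} reduces the volume to the single term $V/\|a\|=b^{d-1}/((d-1)!\,\pi(a))$. Logarithmic differentiation inside $L_\lambda$ then turns each equation $\partial L_\lambda/\partial a_k=0$ into $\mu a_k^2+(d-1)a_k-2b=0$ for one common scalar $\mu$, and summing over $k$ with $\sigma(a)=2(b+t)$ pins down $\mu=2b-2(d-1)t$, which is strictly negative in our range. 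Hence the set $\{a_1,\dots,a_d\}$ has cardinality at most two.

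Suppose for contradiction that there are two distinct positive values $\alpha>\beta$ with multiplicities $p,q\geq 1$ and $p+q=d$. Combining Vieta on the quadratic with $p\alpha^2+q\beta^2=1$ and $p\alpha+q\beta=2(b+t)$ produces the key identity $(\alpha-2t)(\beta-2t)=4t^2-1$, equivalently $\alpha\beta=2t(\alpha+\beta)-1$. Cauchy-Schwarz gives $p\alpha+q\beta\leq\sqrt{d}$, with equality only when $\alpha=\beta$, so the two-value assumption already forces $b<\sqrt{d}/2-t$ strictly.

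For $d\geq 4$ (which covers hypothesis (i) at $d=4$ and all of hypothesis (ii)), I would close the argument as follows. The constraint $\beta\geq b$, together with $\beta$ being the smaller root of the quadratic and $\mu<0$, translates after a short manipulation into $2b^2-2(d-1)bt+(d-3)\leq 0$, so that $b\geq b_-:=[(d-1)t-\sqrt{(d-1)^2t^2-2(d-3)}]/2$. The inequality $b_->\sqrt{d}/2-t$ rearranges (after squaring an affine expression in $t$) into $4dt^2-2(d+1)\sqrt{d}\,t+3(d-2)>0$, and the discriminant of this quadratic in $t$ is exactly $d(d-5)^2$, so it factors as $4d(t-t_1)(t-t_2)$ with $t_1=\sqrt{d}/2-1/\sqrt{d}$ and $t_2=3/(2\sqrt{d})$. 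Hypothesis (ii) gives $t>t_1$ for $d\geq 5$ directly, and hypothesis (i) for $d=4$ gives $t>\sqrt{3}/2>3/4=t_1$. In either case $b_->\sqrt{d}/2-t>b\geq b_-$, a contradiction.

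The case $d=3$ has to be handled separately, because $d-3=0$ makes the constraint $\beta\geq b$ vacuous and the argument above collapses. Here I would substitute $\beta=\sqrt{1-2\alpha^2}$ (the case $(p,q)=(1,2)$ reduces to the same quartic by symmetry) into the key identity and square to obtain $Q(\alpha)=2\alpha^4-8t\alpha^3+(12t^2-1)\alpha^2-(4t^2-1)=0$. For $t>1/\sqrt{3}$ the inner factor $4\alpha^2-12t\alpha+(12t^2-1)$ of $Q'(\alpha)$ has negative discriminant $16(1-3t^2)$, so $Q$ is strictly monotonic on $(0,+\infty)$ and has a unique positive root. A direct computation yields $8t^4\,Q(1/(2t))=-(2t^2-1)(4t^2-1)^2$, which is negative for $t>\sqrt{2}/2=\sqrt{d-1}/2$; hence that root exceeds $1/(2t)$, in conflict with the sign condition $2t\alpha-1<0$ imposed by the unsquared equation. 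The main technical obstacle is precisely this splitting: the coincidence that the discriminant is the perfect square $d(d-5)^2$ is what makes the rearrangement in case $d\geq 4$ land exactly on the threshold appearing in hypothesis (ii), while for $d=3$ there is no choice but to fall back on the quartic analysis above.
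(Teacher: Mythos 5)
Your proof is correct, but it follows a genuinely different route from the paper's. The paper eliminates the multiplier immediately by forming the antisymmetric combination $a_k\,\partial L_\lambda/\partial a_j-a_j\,\partial L_\lambda/\partial a_k=0$, which for $a_j\neq a_k$ collapses to the single relation $b/a_j+b/a_k=(d-1)/2$; since each term is at most $1$ when $a_j,a_k\geq b$, this is outright impossible for $d\geq6$, forces $a_j=a_k=b$ for $d=5$, and for $d\in\{3,4\}$ forces one coordinate to be at most $2b$, which places an edge midpoint of $[0,1]^d$ on the wrong side of $H$, contradicting $t>\sqrt{d-1}/2$ — about five lines in total. You instead keep the multiplier, observe that every coordinate is a root of the common quadratic $\mu x^2+(d-1)x-2b=0$, pin down $\mu=2b-2(d-1)t$ from the two constraints, and derive the identity $\alpha\beta=2t(\alpha+\beta)-1$; I checked the Vieta computation, the reduction of $\beta\geq b$ to $2b^2-2(d-1)tb+(d-3)\leq0$, the factorization of $4dt^2-2(d+1)\sqrt{d}\,t+3(d-2)$ with discriminant $4d(d-5)^2$, and the quartic $Q$ for $d=3$ (including $8t^4Q(1/(2t))=-(2t^2-1)(4t^2-1)^2$), and all of it is sound. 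What the comparison reveals is instructive: your argument genuinely \emph{uses} the hypothesis $t>\sqrt{d}/2-1/\sqrt{d}$ in case (ii) (it is exactly the larger root of your quadratic in $t$), whereas the paper's proof of this lemma does not need any lower bound on $t$ when $d\geq5$ — the threshold in (ii) is only stated for uniformity with the two-vertex case of Section 3. The price of your approach is the separate quartic analysis at $d=3$, where the paper's edge-midpoint observation disposes of both low-dimensional cases at once; the benefit is that your scheme (common quadratic, determination of $\mu$, the identity in $\alpha,\beta,t$) is closer in spirit to what the paper must do in Lemma \ref{HS.sec.3.lem.1} and Proposition \ref{HS.sec.3.prop.1}, and so generalizes more transparently. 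One very minor point: in the $d\geq4$ branch you should note that if the discriminant $(d-1)^2t^2-2(d-3)$ were negative the inequality $2b^2-2(d-1)tb+(d-3)\leq0$ already has no solution, so the contradiction is immediate and $b_-$ need not be real; this costs nothing but should be said.
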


\begin{proof}
Assume that the coordinates of $a$ are all at least $b$ and that $b$ is positive. In this case, the only vertex $v$ of $[0,1]^d$ that satisfies $a\mathord{\cdot}v<b$ is the origin of $\mathbb{R}^d$. It therefore follows from Theorem \ref{HS.sec.1.thm.2} that
$$
V=\frac{b^{d-1}}{(d-1)!\pi(a)}
$$

In turn, according to (\ref{HS.sec.1.eq.2}) and (\ref{HS.sec.2.eq.1}),
\begin{equation}\label{HS.sec.2.lem.2.eq.1}
\frac{\partial{L_\lambda}}{\partial{a_i}}=\frac{1}{(d-1)!\pi(a)}\left(-\frac{b^{d-1}}{a_i}+\frac{d-1}{2}b^{d-2}\right)+2a_i\lambda\mbox{.}
\end{equation}

For any two integers $j$ and $k$ such that $1\leq{j}<k\leq{d}$, (\ref{HS.sec.2.eq.2}) implies that
$$
a_k\frac{\partial{L_\lambda}}{\partial{a_j}}-a_j\frac{\partial{L_\lambda}}{\partial{a_k}}=0\mbox{.}
$$

Using the expression for the partial derivatives from (\ref{HS.sec.2.lem.2.eq.1}) and because of the assumption that $b$ is positive, this can be rewritten as
\begin{equation}\label{HS.sec.2.lem.2.eq.2}
-b\left(\frac{a_k}{a_j}-\frac{a_j}{a_k}\right)+\frac{d-1}{2}(a_k-a_j)=0
\end{equation}

Assume, for contradiction, that $a_j$ and $a_k$ do not coincide. In this case, one can divide (\ref{HS.sec.2.lem.2.eq.2}) by $a_k-a_j$, which yields
\begin{equation}\label{HS.sec.2.lem.2.eq.3}
\frac{b}{a_j}+\frac{b}{a_k}=\frac{d-1}{2}\mbox{.}
\end{equation}

Since both $a_j$ and $a_k$ are at least $b$, this equality only possibly holds when $d$ is at most $5$. If $d$ is equal to $5$, it implies that both $a_j$ and $a_k$ coincide with $b$, contradicting the assumption that $a_j$ and $a_k$ are distinct. If $3\leq{d}\leq4$, then (\ref{HS.sec.2.lem.2.eq.3}) implies that $a_j/2$ and $a_k/2$ cannot both be greater than $b$. Geometrically, this means that one of the closed half-spaces of $\mathbb{R}^d$ bounded by $H$ contains both the origin of $\mathbb{R}^d$ and the midpoint of an edge of $[0,1]^d$ which, in this case, is forbidden by the assumption that $t>\sqrt{d-1}/2$.
\end{proof}

One recovers the result of \cite{MoodyStoneZachZvavitch2013} from Lemma \ref{HS.sec.2.lem.1} and \ref{HS.sec.2.lem.2}.

\begin{thm}\label{HS.sec.2.thm.1}
Assume that $t$ satisfies
\begin{equation}\label{HS.sec.2.thm.1.eq.0}
\frac{\sqrt{d-1}}{2}<t<\frac{\sqrt{d}}{2}
\end{equation}
and consider a hyperplane $H$ at distance $t$ from the center of $[0,1]^d$. If $d\geq3$, then the $(d-1)$-dimensional volume of $H\cap[0,1]^d$ is at most
\begin{equation}\label{HS.sec.2.thm.1.eq.0.5}
\frac{d^{d/2}}{(d-1)!}\!\left(\frac{\sqrt{d}}{2}-t\right)^{\!\!d-1}
\end{equation}
with equality if and only if $H$ is orthogonal to a diagonal of $[0,1]^d$.
\end{thm}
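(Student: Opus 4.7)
The plan is to locate the maximum of $V$ on $\mathbb{S}^{d-1}\cap[0,+\infty[^d$ by combining the existence and regularity results of Section~\ref{HS.sec.1} with a critical-point analysis, and then to invoke Lemma~\ref{HS.sec.2.lem.2} to force the maximizer to point along a diagonal of $[0,1]^d$. Since $t>\sqrt{d-1}/2>1/2$ when $d\geq 3$, Proposition~\ref{HS.sec.1.prop.1} produces a maximizer $a\in\mathbb{S}^{d-1}\cap[0,+\infty[^d$. Lemma~\ref{HS.sec.2.lem.1} then places $a$ in the open orthant $]0,+\infty[^d$, ensures that $b>0$ at $a$, and identifies $a$ as a critical point of the Lagrangian $L_\lambda$ for some $\lambda\in\mathbb{R}$.

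The heart of the argument is the geometric claim that, at this maximizer, the origin is the \emph{only} vertex $v$ of $[0,1]^d$ satisfying $a\cdot v\leq b$. The hypothesis $t>\sqrt{d-1}/2$ guarantees that the open ball of radius $t$ centered at the center of $[0,1]^d$ contains every midpoint of an edge of $[0,1]^d$ in its interior. Because $H$ is tangent to that ball and the center of $[0,1]^d$ lies on the side $\{a\cdot x>b\}$ (as $b<\sigma(a)/2$), each such midpoint satisfies $a\cdot x>b$ strictly. Consequently, no edge of $[0,1]^d$ can have both endpoints in $H^-=\{a\cdot x\leq b\}$, since otherwise its midpoint would also lie in $H^-$. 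Since $0\in H^-$ (because $b>0$), every neighbor $e_i$ of the origin must satisfy $a_i=a\cdot e_i>b$, and for any nonzero vertex $v$ one then gets $a\cdot v\geq\min_i a_i>b$.

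Since all coordinates of $a$ exceed $b$, Lemma~\ref{HS.sec.2.lem.2} applies: case~(i) handles $d\in\{3,4\}$ directly, while for $d\geq 5$ a routine computation (squaring both sides, since both are positive for $d\geq 3$) shows that $t>\sqrt{d-1}/2$ implies $t>\sqrt{d}/2-1/\sqrt{d}$, so case~(ii) covers this range. All coordinates of $a$ therefore coincide, and the normalization $\|a\|=1$ forces $a=(1/\sqrt{d},\ldots,1/\sqrt{d})$, which is a diagonal direction of $[0,1]^d$. Substituting into $b=\sigma(a)/2-t=\sqrt{d}/2-t$ and into the single-term formula $V=b^{d-1}/((d-1)!\pi(a))$ provided by Theorem~\ref{HS.sec.1.thm.2} yields exactly~(\ref{HS.sec.2.thm.1.eq.0.5}); the converse direction of the equality statement is a direct verification, and the uniqueness in $\mathbb{S}^{d-1}\cap[0,+\infty[^d$ combined with the symmetries of $[0,1]^d$ pins down the orthogonality-to-a-diagonal characterization. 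I expect the main obstacle to be the geometric step at the heart of the proof: the tangency of $H$ to $B$ and the containment of the edge-midpoints in $B$ must be combined delicately to rule out any second vertex of $[0,1]^d$ lying in $H^-$, which is what allows the single-term formula to be used in Lemma~\ref{HS.sec.2.lem.2}.
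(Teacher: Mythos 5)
Your proposal is correct and follows essentially the same route as the paper: existence of a maximizer via Proposition~\ref{HS.sec.1.prop.1}, localization to the open orthant and the Lagrange condition via Lemma~\ref{HS.sec.2.lem.1}, and then Lemma~\ref{HS.sec.2.lem.2} to force all coordinates of $a$ to coincide. The only difference is that you spell out explicitly (via the edge-midpoint argument) why every coordinate of $a$ exceeds $b$, a hypothesis of Lemma~\ref{HS.sec.2.lem.2} that the paper leaves implicit in the condition $t>\sqrt{d-1}/2$; your verification is correct, as is your check that $t>\sqrt{d-1}/2$ implies $t>\sqrt{d}/2-1/\sqrt{d}$ so that case~(ii) of that lemma applies when $d\geq5$.
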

\begin{proof}
Assume that $d$ is at least $3$. It follows from this and from (\ref{HS.sec.2.thm.1.eq.0}) that $t>1/2$. In turn, by Proposition \ref{HS.sec.1.prop.1}, $V$ admits a maximum on $\mathbb{S}^{d-1}\cap[0,+\infty[^d$. Thanks to the symmetries of the hypercube, it suffices to show that the maximum of $V$ is uniquely attained at the vector in $\mathbb{S}^{d-1}\cap]0,+\infty[^d$ whose all coordinates coincide. Indeed, the coordinates of that vector are $1/\sqrt{d}$ and it then follows from Theorem \ref{HS.sec.1.thm.2} and from (\ref{HS.sec.1.eq.2}) that $V$ is equal to the announced volume. As (\ref{HS.sec.2.thm.1.eq.0}) holds, one just needs to combine Lemmas \ref{HS.sec.2.lem.1} and \ref{HS.sec.2.lem.2} in order to prove that the coordinates of any vector from $\mathbb{S}^{d-1}\cap]0,+\infty[^d$ where $V$ attains its maximum on $\mathbb{S}^{d-1}\cap]0,+\infty[^d$ necessarily coincide.
\end{proof}

It will be useful to know how (\ref{HS.sec.2.thm.1.eq.0.5}) behaves as a function of $d$.

\begin{prop}\label{HS.sec.2.prop.1}
Assume that $t$ satisfies
$$
\frac{\sqrt{d-2}}{2}\leq{t}\leq\frac{\sqrt{d-1}}{2}\mbox{.}
$$

If in addition, $d\geq5$, then
$$
\frac{d^{d/2}}{(d-1)!}\!\left(\frac{\sqrt{d}}{2}-t\right)^{\!\!d-1}\!\!>\,\frac{(d-1)^{(d-1)/2}}{(d-2)!}\!\left(\frac{\sqrt{d-1}}{2}-t\right)^{\!\!d-2}\!\!\!\!\!\!\!\!\!\mbox{.}
$$
\end{prop}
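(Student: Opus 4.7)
The plan is to reduce the inequality to a single-endpoint check via a monotonicity argument. Write $F(t)$ and $G(t)$ for the two sides of the claimed inequality, and consider the ratio $R(t)=F(t)/G(t)$, which is positive and smooth on $[\sqrt{d-2}/2,\sqrt{d-1}/2)$ and diverges to $+\infty$ at the right endpoint. The goal is to show that $R$ is strictly increasing on this interval, so that $\min R=R(\sqrt{d-2}/2)$, and then to verify $R(\sqrt{d-2}/2)>1$.

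For the monotonicity, I would compute
$$
\frac{R'(t)}{R(t)}=\frac{d-2}{\sqrt{d-1}/2-t}-\frac{d-1}{\sqrt{d}/2-t}\mbox{,}
$$
which is positive precisely when $t>[(d-1)\sqrt{d-1}-(d-2)\sqrt{d}]/2$. It therefore suffices to check $(d-2)\sqrt{d}+\sqrt{d-2}>(d-1)\sqrt{d-1}$. Both sides being positive, one squares and simplifies to the equivalent inequality $2(d-2)\sqrt{d(d-2)}>(d-1)^2$, and squaring once more gives $4d(d-2)^3>(d-1)^4$, which is elementary to check for $d\geq4$ and in particular throughout the range $d\geq5$ of interest.

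It then remains to verify the endpoint inequality $R(\sqrt{d-2}/2)>1$, which is the main technical obstacle. Using the identities $\sqrt{d}-\sqrt{d-2}=2/(\sqrt{d}+\sqrt{d-2})$ and $\sqrt{d-1}-\sqrt{d-2}=1/(\sqrt{d-1}+\sqrt{d-2})$, it takes the closed form
$$
d^{d/2}\cdot2^{d-2}\,(\sqrt{d-1}+\sqrt{d-2})^{d-2}>(d-1)^{(d+1)/2}(\sqrt{d}+\sqrt{d-2})^{d-1}\mbox{.}
$$
The expected plan for this step is to verify $d=5$ by direct computation (the ratio of the two sides is roughly $1.47$) and, for $d\geq6$, to combine the bound $\sqrt{d}+\sqrt{d-2}<2\sqrt{d-1}$ with the classical estimate $(1+1/(d-1))^{d-1}<e$, so that the factor $2^{d-2}$ on the left produces exponential growth in $d$ that comfortably dominates the polynomial behaviour of the remaining factors, giving the inequality with an ever-widening margin.
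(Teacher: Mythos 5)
Your proposal is correct and follows essentially the same route as the paper: both arguments reduce the claim to the single endpoint $t=\sqrt{d-2}/2$ via the same monotonicity condition (your threshold $[(d-1)\sqrt{d-1}-(d-2)\sqrt{d}]/2$ is exactly the paper's quantity $A$ set to zero, and your squaring argument replaces the paper's rationalization), then check $d=5$ numerically and let a power of $2$ dominate the remaining algebraic factors for $d\geq6$. One caution for writing up the final step: the explicit $2^{d-2}$ alone does not suffice, because the bound $\sqrt{d}+\sqrt{d-2}<2\sqrt{d-1}$ injects a competing $2^{d-1}$ on the right-hand side, so you must also extract the hidden factor $2^{d-2}$ from $(\sqrt{d-1}+\sqrt{d-2})^{d-2}\geq(2\sqrt{d-2})^{d-2}$ (the cruder lower bound $(\sqrt{d-1})^{d-2}$ fails), after which the net $2^{d-3}$ comfortably beats the remaining ratio, which is of order $d$.
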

\begin{proof}
Observe that the desired inequality can be rewritten as
\begin{equation}\label{HS.sec.2.prop.1.eq.1}
\frac{d^{d/2}}{(d-1)^{(d+1)/2}}>2\frac{(\sqrt{d-1}-2t)^{d-2}}{(\sqrt{d}-2t)^{d-1}}\mbox{.}
\end{equation}

When $d\geq5$, the right-hand side of this inequality is a decreasing function of $t$ on the considered interval. Indeed, its derivative with respect to $t$ is
$$
4A\frac{(\sqrt{d-1}-2t)^{d-3}}{(\sqrt{d}-2t)^{d}}
$$
where $A=(d-1)\sqrt{d-1}-(d-2)\sqrt{d}-2t$. In particular, that derivative has the sign of $A$ on the considered interval for $t$. When $t$ is in that interval,
$$
A\leq(d-1)\sqrt{d-1}-(d-2)\sqrt{d}-\sqrt{d-2}\mbox{.}
$$

Rearranging the terms of the right-hand side, and simultaneously multiplying and dividing it by $(\sqrt{d}+\sqrt{d-1})(\sqrt{d-1}+\sqrt{d-2})$ yields
$$
A\leq\frac{\sqrt{d}(1-\sqrt{d}\sqrt{d-2})+2\sqrt{d-2}-(d-3)\sqrt{d-1}}{(\sqrt{d}+\sqrt{d-1})(\sqrt{d-1}+\sqrt{d-2})}
$$
which, as $\sqrt{d-2}<\sqrt{d-1}$ implies the negativity of $A$ when $d\geq5$. Hence, it suffices to show that the desired inequality holds for the smallest $t$ in the considered interval. For this reason, it is assumed that $t=\sqrt{d-2}/2$ in the remainder of the proof. Observe that when $d=5$, the left-hand side of (\ref{HS.sec.2.prop.1.eq.1}) is then greater than $0.7$ and its right-hand side less than $0.7$, proving the inequality when $d=5$. Assume that $d\geq6$, and observe that
$$
\begin{array}{rcl}
\displaystyle\frac{(\sqrt{d-1}-\sqrt{d-2})^{d-2}}{(\sqrt{d}-\sqrt{d-2})^{d-1}} & = & \displaystyle\frac{1}{2^{d-1}}\frac{(\sqrt{d}+\sqrt{d-2})^{d-1}}{(\sqrt{d-1}+\sqrt{d-2})^{d-2}}\\[\bigskipamount]
 & \leq & \displaystyle\frac{1}{2^{d-2}}\frac{d^{(d-1)/2}}{(d-2)^{(d-2)/2}}\\[\medskipamount]
 & \leq & \displaystyle\frac{1}{\sqrt{d-1}}\!\left(\frac{1}{2}\sqrt{\frac{d}{d-2}}\right)^{\!\!d}\!\!(4d-8)\mbox{.}\\
\end{array}
$$

As a consequence, it is sufficient to show that
$$
\left(2\sqrt{\frac{d-2}{d-1}}\right)^{\!\!d}\!\!>8d-16\mbox{.}
$$

Now observe that, since $d\geq6$,
$$
2\sqrt{\frac{d-2}{d-1}}\geq\frac{4}{\sqrt{5}}\mbox{.}
$$

Finally, as $(4/\sqrt{5})^d-8d+16$ is positive when $d=6$ and an increasing function of $d$ on $[6,+\infty[$ (because its derivative is positive), (\ref{HS.sec.2.prop.1.eq.1}) holds.
\end{proof}

Theorem \ref{HS.sec.2.thm.1} allows to extend Lemma \ref{HS.sec.2.lem.1} as follows.

\begin{lem}\label{HS.sec.2.lem.3}
Consider a vector $a$ from $\mathbb{S}^{d-1}\cap[0,+\infty[^d$ and assume that the maximum of $V$ on $\mathbb{S}^{d-1}\cap[0,+\infty[^d$ is attained at $a$. If in addition,
\begin{equation}\label{HS.sec.2.lem.3.eq.1}
\frac{\sqrt{d-2}}{2}<t<\frac{\sqrt{d}}{2}\mbox{,}
\end{equation}
then $a$ belongs to the open orthant $]0,+\infty[^d$, $b$ is positive at $a$, and there exists a number $\lambda$ such that $a$ is a critical point of $L_\lambda$.
\end{lem}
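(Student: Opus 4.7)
The plan is to handle the new subrange $\sqrt{d-2}/2<t\leq\sqrt{d-1}/2$ separately, since for $t>\sqrt{d-1}/2$ the three conclusions are already supplied by Lemma \ref{HS.sec.2.lem.1}. At such $a$ one must establish: every coordinate of $a$ is positive, $b>0$ at $a$, and $a$ is a critical point of some $L_\lambda$. The latter two will follow from the first exactly as in Lemma \ref{HS.sec.2.lem.1}, so the heart of the matter is ruling out vanishing coordinates in the enlarged range, where the earlier argument (comparing $t$ to the distance to midpoints of edges) breaks down.

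The strategy is dimension reduction followed by a quantitative comparison with the diagonal direction $a_\star=(1,\ldots,1)/\sqrt{d}$. First, one notes that $V(a)>0$, since $a_\star$ yields a positive volume as soon as $t<\sqrt{d}/2$. Assume for contradiction that a coordinate of $a$ vanishes, say $a_d=0$. Then $H\cap[0,1]^d$ is a cylinder over $H'\cap[0,1]^{d-1}$, where $H'$ is the corresponding hyperplane of $\mathbb{R}^{d-1}$ sitting at distance $t$ from the center of $[0,1]^{d-1}$, so Fubini gives that $V(a)$ equals the $(d-2)$-dimensional volume of $H'\cap[0,1]^{d-1}$. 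In dimension $d-1$, the range $\sqrt{d-2}/2<t\leq\sqrt{d-1}/2$ is exactly $\sqrt{(d-1)-1}/2<t\leq\sqrt{d-1}/2$, so Theorem \ref{HS.sec.2.thm.1} (with Corollary \ref{HS.sec.1.cor.2} absorbing the possibility of further vanishing coordinates of $a$, and the boundary $t=\sqrt{d-1}/2$ giving $V(a)=0$ trivially) yields
$$
V(a)\leq\frac{(d-1)^{(d-1)/2}}{(d-2)!}\!\left(\frac{\sqrt{d-1}}{2}-t\right)^{\!\!d-2}\!.
$$
On the other hand, a one-line check that $1/\sqrt{d}\geq\sqrt{d}/2-t$ (equivalently $t\geq(d-2)/(2\sqrt{d})$, which is strictly weaker than $t>\sqrt{d-2}/2$) shows that $a_\star$ satisfies the hypotheses of Lemma \ref{HS.sec.2.lem.2} throughout the subrange, so
$$
V(a_\star)=\frac{d^{d/2}}{(d-1)!}\!\left(\frac{\sqrt{d}}{2}-t\right)^{\!\!d-1}\!.
$$
Proposition \ref{HS.sec.2.prop.1} then certifies that $V(a_\star)$ strictly exceeds the above bound on $V(a)$, contradicting maximality.

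With $a\in\,]0,+\infty[^d$ in hand, the positivity of $b$ follows exactly as in Lemma \ref{HS.sec.2.lem.1}: $b<0$ would leave $H\cap[0,1]^d$ empty, and $b=0$ would reduce it to the origin, each forcing $V(a)=0$. The existence of $\lambda$ is then the standard Lagrange multipliers conclusion, using the continuous differentiability of $L_\lambda$ on the open orthant from Corollary \ref{HS.sec.1.cor.1}.

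The main obstacle is the strict quantitative comparison between the diagonal-direction value in dimension $d$ and the $(d-1)$-dimensional maximum at the same distance $t$; but this is exactly the content of Proposition \ref{HS.sec.2.prop.1}, so the substantive analytic work has already been invested and is simply invoked here. The remaining concerns are mild bookkeeping: confirming that the restriction of the problem to $a_d=0$ lands inside the range of Theorem \ref{HS.sec.2.thm.1} in dimension $d-1$, that $a_\star$ falls into the single-vertex regime of Lemma \ref{HS.sec.2.lem.2} over the entire subrange, and that the statement is implicitly restricted to $d\geq 5$ inherited from the hypothesis of Proposition \ref{HS.sec.2.prop.1}.
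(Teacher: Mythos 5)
Your proof is correct and follows essentially the same route as the paper: rule out two or more vanishing coordinates, reduce the case of a single vanishing coordinate to a $(d-1)$-dimensional section at the same distance $t$, bound it by Theorem~\ref{HS.sec.2.thm.1}, and contradict maximality via Proposition~\ref{HS.sec.2.prop.1} against the diagonal value. The only cosmetic slip is attributing the value of $V$ at the diagonal direction to Lemma~\ref{HS.sec.2.lem.2} (which concerns critical points, not volume values); the value actually comes from the single-vertex case of Theorem~\ref{HS.sec.1.thm.2}, and your verification that the diagonal direction lies in that regime is exactly the right check.
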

\begin{proof}
Recall that, if all the coordinates of $a$ are equal, then $V$ is
\begin{equation}\label{HS.sec.2.lem.3.eq.2}
\frac{d^{d/2}}{(d-1)!}\!\left(\frac{\sqrt{d}}{2}-t\right)^{\!\!d-1}\!\!\!\!\!\!\!\!\!\mbox{.}
\end{equation}

As $V$ attains its maximum on $\mathbb{S}^{d-1}\cap[0,+\infty[^d$ at $a$, then $V$ must be at least (\ref{HS.sec.2.lem.3.eq.2}). According to (\ref{HS.sec.2.lem.3.eq.1}), the distance between the center of $[0,1]^d$ and $H$ is greater than the distance between the center of $[0,1]^d$ and the center of a square face of $[0,1]^d$. In particular, if two of the coordinates of $a$ were equal to $0$, then $H$ would be disjoint from $[0,1]^d$. Since $V$ is positive, this shows that all the coordinates of $a$ are non-zero except possibly one.

Assume for contradiction that a unique coordinate of $a$ is equal to zero and identify for a moment $\mathbb{R}^{d-1}$ with the subspace of $\mathbb{R}^d$ spanned by the other $d-1$ coordinates. In this case, $V$ is equal to the $(d-2)$-dimensional volume of $H\cap[0,1]^{d-1}$. Now observe that (\ref{HS.sec.2.lem.3.eq.1}) can be rewritten as
$$
\frac{\sqrt{(d-1)-1}}{2}<t<\frac{\sqrt{(d-1)+1}}{2}\mbox{.}
$$

If $t\geq\sqrt{d-1}/2$, then either $H$ and $[0,1]^{d-1}$ are disjoint or their intersection is a vertex of $[0,1]^{d-1}$. In both of these cases, $V$ would be equal to zero, which is impossible. If $t<\sqrt{d-1}/2$,  then by Theorem \ref{HS.sec.2.thm.1}, $V$ is at most
$$
\frac{(d-1)^{(d-1)/2}}{(d-2)!}\!\left(\frac{\sqrt{d-1}}{2}-t\right)^{d-2}
$$
and according to Proposition \ref{HS.sec.2.prop.1}, this quantity is less than (\ref{HS.sec.2.lem.3.eq.2}), contradicting the above observation that $V$ is at least (\ref{HS.sec.2.lem.3.eq.2}). This proves that $a$ belongs to $]0,+\infty[^d$. One obtains that $b$ is positive at $a$ and that, for some number $\lambda$, $a$ is a critical point of $L_\lambda$ by the same argument as in the proof of Lemma \ref{HS.sec.2.lem.1}.
\end{proof}

\section{Cutting the edges of the hypercube}\label{HS.sec.3}

This section treats the case when the hypercube $[0,1]^d$ admits exactly two of its vertices on one side of $H$. In other words, instead of just cutting a corner of the hypercube, $H$ cuts an edge. As a first step, Lemma \ref{HS.sec.2.lem.2} will be generalized to this case. The following proposition will be useful. Its proof is only technical, and it is postoned until the end of the section.

\begin{prop}\label{HS.sec.3.prop.1}
Consider a number $y$ in $]0,1[$ and denote
$$
\left\{
\begin{array}{l}
\alpha=2-2y^{d-1}-(d-1)(1-y)(1+y^{d-2})\mbox{,}\\
\beta=(d-1)(1-y)^2(1-y^{d-2})\mbox{,}\\
\gamma=-2(1-y)^2(1-y^{d-1})\mbox{.}\\
\end{array}
\right.
$$

If $d\geq6$, then the quadratic equation
$$
\alpha{x^2}+\beta{x}+\gamma=0
$$
does not admit a solution in the interval $[1,+\infty[$.
\end{prop}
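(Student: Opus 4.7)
The plan is to show that the quadratic $f(x) = \alpha x^2 + \beta x + \gamma$ satisfies $f(x) < 0$ for every $x \geq 1$, which immediately forbids a root of $f$ in $[1,+\infty[$. The main tool is the Taylor expansion of $f$ at $x = 1$,
$$
f(x) = f(1) + (2\alpha + \beta)(x - 1) + \alpha(x - 1)^2\mbox{,}
$$
which reduces the problem to proving the three inequalities
$$
\alpha < 0\mbox{,}\qquad \alpha + \beta + \gamma < 0\mbox{,}\qquad 2\alpha + \beta \leq 0
$$
for every $y \in (0,1)$ and every $d \geq 6$. Granted these, each summand on the right-hand side of the expansion is non-positive when $x \geq 1$ and the first summand is strictly negative.

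The first inequality, $\alpha < 0$, is the easiest. After expansion,
$$
\alpha = (3 - d) + (d-1)y - (d-1)y^{d-2} + (d-3)y^{d-1}\mbox{,}
$$
so $\alpha(0) = 3 - d < 0$ and $\alpha(1) = 0$. Moreover,
$$
\alpha'(y) = (d-1)\bigl[1 - (d-2)y^{d-3} + (d-3)y^{d-2}\bigr]
$$
is positive on $(0,1)$, since the bracket takes the value $1$ at $y = 0$, the value $0$ at $y = 1$, and its own derivative $(d-2)(d-3)y^{d-4}(y-1)$ is negative on the open interval. Hence $\alpha$ strictly increases from $3 - d$ to $0$ and is negative on $(0,1)$.

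For the other two inequalities I would exploit a common structural feature. Direct evaluation at $y = 1$ shows that $\alpha + \beta + \gamma$ and $2\alpha + \beta$ both vanish there together with their first two derivatives, and that $\alpha + \beta + \gamma$ also vanishes at $y = 0$. This yields factorizations
$$
\alpha + \beta + \gamma = y(y-1)^3 \tilde{R}(y)\mbox{,}\qquad 2\alpha + \beta = (y-1)^3 \bar{R}(y)\mbox{,}
$$
with $\tilde{R}$ and $\bar{R}$ polynomials of degree $d-3$. Since $y(y-1)^3$ and $(y-1)^3$ are both negative on $(0,1)$, the two target inequalities reduce to showing $\tilde{R}(y) > 0$ and $\bar{R}(y) \geq 0$ on $[0,1]$. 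The case $d = 6$ already yields the tidy explicit forms $\alpha + \beta + \gamma = y(y-1)^5(2y+1)$ and $2\alpha + \beta = -(y-1)^4(5y^2 + 4y + 1)$, both manifestly non-positive on $[0,1]$, and this is reassuring.

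The main obstacle is precisely establishing the positivity of $\tilde{R}$ and $\bar{R}$ on $[0,1]$ in arbitrary dimension. Since these have degree $d-3$, the argument must proceed uniformly in $d$. My approach would be to first extract all further factors of $(1-y)$ visible through the identities $1 - y^j = (1-y)\sum_{i=0}^{j-1} y^i$, and then to display the reduced cofactor as a sum of terms whose signs can be read off by monotonicity in $y$ together with elementary comparisons among the $d$-dependent coefficients. This polynomial bookkeeping is where the bulk of the technical work lies, and is consistent with the statement in the text that the proof is postponed for being only technical.
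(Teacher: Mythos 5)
Your reduction is exactly the one the paper uses: writing $f(x)=\alpha x^2+\beta x+\gamma$ as $f(1)+(2\alpha+\beta)(x-1)+\alpha(x-1)^2$ and proving that $\alpha$, $2\alpha+\beta$ and $\alpha+\beta+\gamma$ are all negative on $]0,1[$. Your proof of $\alpha<0$ also coincides with the paper's Proposition~\ref{HS.sec.3.prop.2}: the bracket in $\alpha'$ is decreasing in $y$ and vanishes at $y=1$, hence is positive, so $\alpha$ increases to $\alpha(1)=0$.

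The gap is in the other two inequalities, which are the actual content of the proposition. You verify them only for $d=6$ and otherwise offer a plan --- factor out $(y-1)^3$ (and $y$), then show that the degree-$(d-3)$ cofactor is positive on $[0,1]$ by polynomial bookkeeping --- without carrying it out; as you say yourself, this is where the bulk of the work lies, so the proof is incomplete precisely at its crux. Proving positivity of a cofactor of degree $d-3$ uniformly in $d$ is essentially the original difficulty in different clothing, and your $d=6$ computations even show that the vanishing order at $y=1$ exceeds $3$, so the cofactors you define are only non-negative, not positive, on $[0,1]$. The paper closes this gap not by factoring but by iterated differentiation until a polynomial of \emph{fixed} degree in $y$ appears. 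For $2\alpha+\beta$ it shows that $y^{-1}\partial_y(2\alpha+\beta)$ has derivative $(d-1)(d-2)y^{d-5}(1-y)(12-3d+dy)$, which is negative when $d\geq6$; since that quantity vanishes at $y=1$ it is positive on $]0,1[$, so $2\alpha+\beta$ increases to $0$. For $\alpha+\beta+\gamma$ it differentiates $(\alpha+\beta+\gamma)/y$ twice, reaching $(d-1)(1-y)y^{d-5}(Ay^2+By+C)$ with $A=-2d$, $B=d^2-d-6$, $C=-2d^2+14d-24$, and controls the sign of the quadratic via $2A+B=(d+1)(d-6)\geq0$ and $A+B+C=-(d-5)(d-6)\leq0$. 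Some device of this kind --- reducing the $d$-dependence to a few coefficients that visibly factor through $d-6$ --- is the step your proposal is missing.
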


The announced generalization of Lemma \ref{HS.sec.2.lem.2} can now be given.

\begin{lem}\label{HS.sec.3.lem.1}
Consider a critical point $a$ of $L_\lambda$ contained in $\mathbb{S}^{d-1}\cap]0,+\infty[^d$. Assume that $t<\sqrt{d}/2$ and that either 
\begin{enumerate}
\item[(i)] $d=5$ and $t>\sqrt{d-2}/2$, or
\item[(ii)] $d\geq6$ and $t>\sqrt{d}/2-1/\sqrt{d}$.
\end{enumerate}

If $b$ is positive at $a$ and at least $d-1$ of the coordinates of $a$ are greater than or equal to $b$, then all of the coordinates of $a$ necessarily coincide.
\end{lem}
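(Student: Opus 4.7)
The plan is to reduce to Proposition~\ref{HS.sec.3.prop.1} by a case analysis on how many coordinates of $a$ are strictly less than $b$. By hypothesis, at most one is. If none is, then Lemma~\ref{HS.sec.2.lem.2} applies directly, since the hypothesis of Lemma~\ref{HS.sec.3.lem.1} implies that of Lemma~\ref{HS.sec.2.lem.2}: for $d=5$ one checks $\sqrt{d-2}/2>\sqrt{d}/2-1/\sqrt{d}$, and for $d\geq6$ the two hypotheses coincide. Hence, after a permutation of coordinates, I may assume $a_1<b$ while $a_i\geq b$ for all $i\geq2$.

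Under this assumption, the only vertices $v$ of $[0,1]^d$ satisfying $a\mathord{\cdot}v\leq b$ are $v=0$ and $v=e_1$: any other vertex has at least two coordinates equal to $1$, so $a\mathord{\cdot}v$ contains either two summands from $\{a_2,\ldots,a_d\}$ (hence is at least $2b>b$) or $a_1$ plus a summand in $\{a_2,\ldots,a_d\}$ (hence exceeds $b$ because $a_1>0$). Theorem~\ref{HS.sec.1.thm.2} then yields
$$
V=\frac{\|a\|\bigl[b^{d-1}-(b-a_1)^{d-1}\bigr]}{(d-1)!\,\pi(a)}\mbox{.}
$$
Substituting into (\ref{HS.sec.2.eq.1}) and computing the partial derivatives, the critical point condition (\ref{HS.sec.2.eq.2}) becomes $2\lambda g\,a_i^2+Aa_i-B=0$ for $i\geq2$ and $2\lambda g\,a_1^2+Ca_1-B=0$, where $g=(d-1)!\,\pi(a)$, $A=\frac{d-1}{2}(b^{d-2}-(b-a_1)^{d-2})$, $C=A+(d-1)(b-a_1)^{d-2}$, and $B=b^{d-1}-(b-a_1)^{d-1}$. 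In particular $a_2,\ldots,a_d$ are all roots of the same quadratic and so take at most two distinct values.

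The key step is to eliminate $2\lambda g$. Solving for it in each relation and equating gives
$$
(B-Ca_1)\,a_i^2+A\,a_1^2\,a_i-B\,a_1^2=0\mbox{.}
$$
Setting $y=(b-a_1)/b$, which belongs to ${]0,1[}$ since $0<a_1<b$, and $x=a_i/b\in[1,+\infty[$, and using the identities $B=b^{d-1}(1-y^{d-1})$, $A=\frac{d-1}{2}b^{d-2}(1-y^{d-2})$, $C=\frac{d-1}{2}b^{d-2}(1+y^{d-2})$, $a_1=b(1-y)$, a routine expansion followed by multiplication by $2/b^{d+1}$ transforms this into precisely the quadratic $\alpha x^2+\beta x+\gamma=0$ of Proposition~\ref{HS.sec.3.prop.1}.

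For $d\geq6$, Proposition~\ref{HS.sec.3.prop.1} furnishes an immediate contradiction. For $d=5$ the proposition does not apply, so I would work with the quadratic directly: the factorizations $\alpha=-2(1-y)^3(1+y)$, $\beta=4(1-y)^3(1+y+y^2)$, $\gamma=-2(1-y)^3(1+y)(1+y^2)$ reveal roots $x=1+y$ and $x=(1+y^2)/(1+y)$, the latter strictly less than $1$ on ${]0,1[}$. The hypothesis $t>\sqrt{d-2}/2=\sqrt{3}/2$ means that the centroid $(e_1+e_j)/2$ of the $2$-face varying coordinates $1$ and $j$ (with others set to $0$) lies in the interior of $B$, so $a_1+a_j>2b$, giving $x=a_j/b>1+y$ strictly and contradicting the only candidate root $x=1+y$. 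I expect the main obstacle to be the algebraic bookkeeping that converts the eliminated equation into the exact form $\alpha x^2+\beta x+\gamma=0$, together with the explicit factorization of the quadratic when $d=5$; a minor side case where $\lambda g=0$ is dispatched by observing that then $a_i=B/A$, which is strictly less than $b$ for every $d\geq5$ and every $y\in{]0,1[}$, contradicting $a_i\geq b$.
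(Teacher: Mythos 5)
Your proposal is correct and follows the paper's proof almost step for step: the same reduction to the case of exactly one coordinate below $b$ (with the all-coordinates-at-least-$b$ case handed to Lemma~\ref{HS.sec.2.lem.2}, whose hypothesis you rightly check is implied), the same two-vertex volume formula from Theorem~\ref{HS.sec.1.thm.2}, the same elimination of the multiplier leading to exactly the quadratic $\alpha x^2+\beta x+\gamma=0$ of Proposition~\ref{HS.sec.3.prop.1}, and the same identification of the roots $1+y$ and $(1+y^2)/(1+y)$ when $d=5$. The one place you genuinely diverge is the endgame of the $d=5$ case: the paper deduces $a_1+a_j=2b$ from the root $x=1+y$ and then combines $\|a\|^2=1$ with the definition of $b$ to reach the impossible $a_1^2<-1/3$, whereas you observe directly that $t>\sqrt{3}/2$ places the square-face center $(e_1+e_j)/2$ in the interior of $B$, hence strictly on the same side of $H$ as the center of the cube, so $a_1+a_j>2b$, contradicting $a_1+a_j=2b$ outright. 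Your version is shorter, avoids the normalization computation, and makes transparent why the threshold $\sqrt{d-2}/2$ is exactly what is needed. Two cosmetic remarks: your justification that only $0$ and $e_1$ lie in the relevant half-space overlooks the vertices $e_j$ with $j\geq2$ (which have a single coordinate equal to $1$), but these satisfy $a\mathord{\cdot}e_j=a_j\geq b$ and in the boundary case contribute $0$ to the sum, so the formula for $V$ is unaffected; and the side case $\lambda g=0$ is already covered by the cross-multiplied elimination, so it need not be treated separately.
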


\begin{proof}
Assume that $b$ is positive. It will be shown that, under the assumptions made on $d$ and $t$, none of the coordinates of $a$ can be less than $b$. Observe that, in this case, the result then follows from Lemma \ref{HS.sec.2.lem.2}. Assume, for contradiction that $a$ has a unique coordinate less than $b$. It can be required without loss of generality that this coordinate is the first. In this case, the only two vertices $v$ of $[0,1]^d$ such that $a\mathord{\cdot}v<b$ are the origin of $\mathbb{R}^d$ and the vertex whose only non-zero coordinate is the first. Hence, by Theorem \ref{HS.sec.1.thm.2},
$$
V=\frac{b^{d-1}-(b-a_1)^{d-1}}{(d-1)!\pi(a)}
$$

Therefore, it follows from (\ref{HS.sec.2.eq.1}) that, 
$$
\frac{\partial{L_\lambda}}{\partial{a_1}}=-\frac{b^{d-1}-(b-a_1)^{d-1}}{(d-1)!a_1\pi(a)}+\frac{b^{d-2}+(b-a_1)^{d-2}}{2(d-2)!\pi(a)}+2a_1\lambda\mbox{.}
$$
and that, when $i\geq2$,
$$
\frac{\partial{L_\lambda}}{\partial{a_i}}=-\frac{b^{d-1}-(b-a_1)^{d-1}}{(d-1)!a_i\pi(a)}+\frac{b^{d-2}-(b-a_1)^{d-2}}{2(d-2)!\pi(a)}+2a_i\lambda\mbox{.}
$$

Consider an integer $j$ such that $2\leq{j}\leq{d}$. According to (\ref{HS.sec.2.eq.2}),
$$
a_j\frac{\partial{L_\lambda}}{\partial{a_1}}-a_1\frac{\partial{L_\lambda}}{\partial{a_j}}=0\mbox{.}
$$

Multiplying this equality by $-2a_1a_j/b^{d+1}$, and using the above expressions for the partial derivatives, it can be rewritten as
\begin{equation}\label{HS.sec.3.lem.1.eq.1}
\alpha{x^2}+\beta{x}+\gamma=0\mbox{,}
\end{equation}
where $x=a_j/b$, $y=1-a_1/b$, and
$$
\left\{
\begin{array}{l}
\alpha=2-2y^{d-1}-(d-1)(1-y)(1+y^{d-2})\mbox{,}\\
\beta=(d-1)(1-y)^2(1-y^{d-2})\mbox{,}\\
\gamma=-2(1-y)^2(1-y^{d-1})\mbox{.}\\
\end{array}
\right.
$$

Note that $y$ belongs to the interval $]0,1[$ because $0<a_1<b$. By Proposition~\ref{HS.sec.3.prop.1}, $x$ cannot belong to $[1,+\infty[$ when $d\geq6$, thereby contradicting the assumption that $a_j\geq{b}$. It remains to reach a contradiction when $d=5$. In this case, the two solutions of the quadratic equation (\ref{HS.sec.3.lem.1.eq.1}) are $y+1$ and $(y^2+1)/(y+1)$. Only the former belongs to $[1,+\infty[$, hence
$$
a_1+a_j=2b\mbox{.}
$$

In particular, all the coordinates of $a$ but the first are equal to $2b-a_1$. Now recall that $\|a\|^2=1$ and that (\ref{HS.sec.1.eq.2}) holds. These equalities yield
$$
\left\{
\begin{array}{l}
a_1^2+4(2b-a_1)^2=1\mbox{,}\\
2b-a_1=2t/3\mbox{.}
\end{array}
\right.
$$

Replacing $2b-a_1$ by $2t/3$ in the first of these equalities and remembering the assumption that $t>\sqrt{d-2}/2$, one obtains $a_1^2<-1/3$, a contradiction.
\end{proof}

The main result of the article is the following theorem. 

\begin{thm}\label{HS.sec.3.thm.1}
Assume that $t$ satisfies
$$
\frac{\sqrt{d-2}}{2}<t<\frac{\sqrt{d}}{2}
$$
and consider a hyperplane $H$ at distance $t$ from the center of $[0,1]^d$. If $d\geq5$, then the $(d-1)$-dimensional volume of $H\cap[0,1]^d$ is at most
$$
\frac{d^{d/2}}{(d-1)!}\!\left(\frac{\sqrt{d}}{2}-t\right)^{\!\!d-1}
$$
with equality if and only if $H$ is orthogonal to a diagonal of $[0,1]^d$.
\end{thm}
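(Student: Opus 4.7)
The plan is to reduce the maximization to one of the cases already treated and then identify the unique maximizer. First I would invoke Proposition~\ref{HS.sec.1.prop.1} to secure the existence of a maximizer $a$ of $V$ on $\mathbb{S}^{d-1}\cap[0,+\infty[^d$; its hypothesis $t>1/2$ is automatic from $t>\sqrt{d-2}/2$ and $d\geq 5$. Lemma~\ref{HS.sec.2.lem.3} then places $a$ in the open orthant $]0,+\infty[^d$, ensures that $b>0$ at $a$, and produces a real number $\lambda$ with respect to which $a$ is a critical point of $L_\lambda$. From this point the remaining task is structural: to pinpoint how many coordinates of $a$ can be smaller than $b$ so that Lemma~\ref{HS.sec.3.lem.1} can be applied.

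The key intermediate step I would establish is that at most one coordinate of $a$ can be less than or equal to $b$. For this I would exploit the tangency of $H$ with the ball $B$ of radius $t$ centered at $c=(1/2,\ldots,1/2)$. Since $a\in\mathbb{S}^{d-1}$ and $a\cdot c=b+t>b$, the open ball $\mathrm{int}(B)$ is contained in the half-space $\{x:a\cdot x>b\}$. For any two distinct indices $i,j$, the center $(e_i+e_j)/2$ of the corresponding $2$-dimensional face of $[0,1]^d$ lies at distance exactly $\sqrt{d-2}/2$ from $c$, hence belongs to $\mathrm{int}(B)$ under the hypothesis $t>\sqrt{d-2}/2$. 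This forces $a_i+a_j>2b$ for every $i\neq j$, so two coordinates of $a$ cannot be simultaneously at most $b$. Consequently, at least $d-1$ coordinates of $a$ are greater than or equal to $b$, and Lemma~\ref{HS.sec.3.lem.1} applies: case (i) covers $d=5$ directly, while for $d\geq 6$ the elementary inequality $\sqrt{d}-\sqrt{d-2}=2/(\sqrt{d}+\sqrt{d-2})<2/\sqrt{d}$ gives $\sqrt{d-2}/2>\sqrt{d}/2-1/\sqrt{d}$ and hence puts $t$ in the range required by case (ii). The conclusion is that all coordinates of $a$ coincide.

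It then remains to evaluate $V$ at the resulting vector. Up to the symmetries of $[0,1]^d$, $a=(1/\sqrt{d},\ldots,1/\sqrt{d})$, so $b=\sqrt{d}/2-t$ is positive and strictly smaller than every coordinate of $a$. The only vertex $v$ of $[0,1]^d$ satisfying $a\cdot v\leq b$ is therefore the origin, and Theorem~\ref{HS.sec.1.thm.2} collapses to a single term, yielding exactly $\frac{d^{d/2}}{(d-1)!}(\sqrt{d}/2-t)^{d-1}$; this proves both the bound and the equality characterization. The main obstacle I anticipate is precisely the tangency step, since it is the point at which the hypothesis $t>\sqrt{d-2}/2$ is genuinely used and where the reduction to the two-vertex configuration treated in Lemma~\ref{HS.sec.3.lem.1} becomes available. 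The deeper difficulty of showing that, in that configuration, the critical point cannot be asymmetric is already packaged into Lemma~\ref{HS.sec.3.lem.1} and the computational Proposition~\ref{HS.sec.3.prop.1}, which I would treat as a black box.
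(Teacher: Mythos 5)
Your proof is correct and follows essentially the same route as the paper, which simply combines Proposition~\ref{HS.sec.1.prop.1}, Lemma~\ref{HS.sec.2.lem.3}, and Lemma~\ref{HS.sec.3.lem.1} as in the proof of Theorem~\ref{HS.sec.2.thm.1}. Your tangency argument showing $a_i+a_j>2b$ for $i\neq j$ (hence at least $d-1$ coordinates of $a$ are at least $b$) and the verification that $\sqrt{d-2}/2>\sqrt{d}/2-1/\sqrt{d}$ merely make explicit the hypotheses of Lemma~\ref{HS.sec.3.lem.1} that the paper leaves implicit, which is a welcome addition rather than a deviation.
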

\begin{proof}
The proof proceeds just as that of Theorem \ref{HS.sec.2.thm.1}, except that Lemmas~\ref{HS.sec.2.lem.3} and \ref{HS.sec.3.lem.1} are used instead of Lemmas \ref{HS.sec.2.lem.1} and \ref{HS.sec.2.lem.2}.
\end{proof}

The remainder of the section is devoted to proving Proposition \ref{HS.sec.3.prop.1}. Recall that, in the statement of this proposition, $y$ is a number in $]0,1[$ and
$$
\left\{
\begin{array}{l}
\alpha=2-2y^{d-1}-(d-1)(1-y)(1+y^{d-2})\mbox{,}\\
\beta=(d-1)(1-y)^2(1-y^{d-2})\mbox{,}\\
\gamma=-2(1-y)^2(1-y^{d-1})\mbox{.}\\
\end{array}
\right.
$$

The proposition states that the solutions of the quadratic equation
\begin{equation}\label{HS.sec.3.eq.1}
\alpha{x^2}+\beta{x}+\gamma=0
\end{equation}
cannot belong to $[1,+\infty[$ when $d\geq6$. 
It suffices to show that $\alpha$, $2\alpha+\beta$, and $\alpha+\beta+\gamma$ are negative. Indeed, the latter inequality implies that the left-hand side of (\ref{HS.sec.3.eq.1}) is negative when $x=1$ and the other two that it is an decreasing function of $x$ in the interval $[1,+\infty[$ (because its derivative is negative). Each of these three inequalities will be proven in a separate proposition.

\begin{prop}\label{HS.sec.3.prop.2}
If $d\geq4$, then $\alpha$ is negative.
\end{prop}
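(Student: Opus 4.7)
The plan is to first factor out $(1-y)$ from $\alpha$. Writing $2(1-y^{d-1})=2(1-y)(1+y+\cdots+y^{d-2})$, one obtains $\alpha=(1-y)g(y)$, where
$$
g(y)=2(1+y+y^2+\cdots+y^{d-2})-(d-1)(1+y^{d-2})\mbox{.}
$$
Since $1-y>0$ on $\interval{]0,1[}$, it suffices to prove that $g(y)<0$ for $d\geq4$.

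Next, I would pull off the $k=0$ and $k=d-2$ terms of the geometric sum in $g$ to match them with the corresponding two out of the $(d-1)$ copies of $1+y^{d-2}$, which leaves
$$
g(y)=2\sum_{k=1}^{d-3}y^k-(d-3)(1+y^{d-2})\mbox{.}
$$
The key symmetrization is the observation that, as $k$ ranges over $\{1,\dots,d-3\}$, so does $d-2-k$, and each value is hit twice by the pairing $k\leftrightarrow d-2-k$. Hence $2\sum_{k=1}^{d-3}y^k=\sum_{k=1}^{d-3}(y^k+y^{d-2-k})$, and it is enough to show that, for every $k$ in $\{1,\dots,d-3\}$,
$$
y^k+y^{d-2-k}<1+y^{d-2}\mbox{.}
$$

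The inequality follows from the identity
$$
1+y^{d-2}-y^k-y^{d-2-k}=(1-y^k)(1-y^{d-2-k})\mbox{,}
$$
whose right-hand side is a product of two strictly positive numbers, since $y\in{\interval{]0,1[}}$ and both exponents $k$ and $d-2-k$ lie in $\{1,\dots,d-3\}$ (which is non-empty precisely because $d\geq4$). Summing this strict inequality over $k$ from $1$ to $d-3$ yields $g(y)<0$, and hence $\alpha<0$.

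There is no real obstacle here; the whole argument hinges on noticing the factorization $1+y^{d-2}-y^k-y^{d-2-k}=(1-y^k)(1-y^{d-2-k})$, after which the hypothesis $d\geq4$ enters only to ensure that the pairing is non-degenerate. The boundary behavior is a useful sanity check: $\alpha$ vanishes at $y=1$ (consistent with the factor $(1-y)$) and tends to $3-d<0$ as $y\to0^+$, so one expects $\alpha$ to be strictly negative throughout $\interval{]0,1[}$.
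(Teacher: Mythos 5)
Your proof is correct, and it takes a genuinely different route from the paper's. The paper argues by calculus: it computes $\partial^2\alpha/\partial y^2=-(d-1)(d-2)(d-3)(1-y)y^{d-4}$, concludes that $\partial\alpha/\partial y$ is strictly decreasing on $]0,1[$ and vanishes at $y=1$, hence is positive there, so that $\alpha$ is strictly increasing and, vanishing at $y=1$, must be negative on $]0,1[$. You instead produce a purely algebraic identity: factoring out $(1-y)$ and pairing the terms $y^k$ with $y^{d-2-k}$ gives
$$
\alpha=-(1-y)\sum_{k=1}^{d-3}\bigl(1-y^k\bigr)\bigl(1-y^{d-2-k}\bigr)\mbox{,}
$$
from which negativity is manifest, since every factor is strictly positive for $y$ in $]0,1[$ and the sum is non-empty exactly when $d\geq4$. (As a check, for $d=4$ this reads $\alpha=-(1-y)^3$, and for $d=3$ the empty sum correctly gives $\alpha\equiv0$.) Your argument is more elementary --- no differentiation at all --- and yields an explicit closed form that also quantifies how negative $\alpha$ is; the paper's approach is less sharp but is stylistically uniform with the proofs of the companion inequalities for $2\alpha+\beta$ and $\alpha+\beta+\gamma$, which are handled by the same repeated-differentiation-plus-boundary-value scheme and do not appear to admit an equally clean factorization.
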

\begin{proof}
First observe that, when $d\geq4$ the second derivative
$$
\frac{\partial^2\alpha}{\partial{y^2}}=-(d-1)(d-2)(d-3)(1-y)y^{d-4}
$$
is negative. As a consequence, the derivative
$$
\frac{\partial\alpha}{\partial{y}}=(d-1)\!\left[1-(d-2)y^{d-3}+(d-3)y^{d-2}\right]
$$
is a strictly decreasing function of $y$ on the interval $]0,1[$. As that derivative is continuous on $]0,1]$ and equal to $0$ when $y=1$, it must be positive when $y$ belongs $[0,1[$. This proves that, in turn $\alpha$ is strictly increasing, as a function of $y$, on the interval $]0,1[$. The proposition follows from the observation that $\alpha$ is a continuous function of $y$ and that is is equal to $0$ when $y=1$.
\end{proof}

\begin{prop}\label{HS.sec.3.prop.3}
If $d\geq6$, then $2\alpha+\beta$ is negative.
\end{prop}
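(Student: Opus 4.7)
The plan is to parallel the proof of Proposition~\ref{HS.sec.3.prop.2}, but one derivative deeper. First I would expand $F(y) := 2\alpha(y)+\beta(y)$ as a polynomial in $y$: a direct substitution yields
$$F(y) \;=\; (5-d) + (d-1)y^2 - 3(d-1)y^{d-2} + 4(d-2)y^{d-1} - (d-1)y^d,$$
from which one reads off that $F(1)=F'(0)=F'(1)=F''(1)=0$ and $F''(0)=2(d-1)>0$. The decisive computation is the third derivative, which factors as
$$F'''(y) \;=\; (d-1)(d-2)\,y^{d-5}\,G(y), \qquad G(y) \;=\; -3(d-3)(d-4) + 4(d-2)(d-3)\,y - d(d-1)\,y^2.$$
Here $G$ is a concave-down quadratic with $G(0) = -3(d-3)(d-4) < 0$ and $G(1) = 2(d-6) \geq 0$ when $d \geq 6$. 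By concavity its two real roots $y_* \leq y_{**}$ must satisfy $0 < y_* \leq 1 \leq y_{**}$, so $G$ is strictly negative on $(0, y_*)$ and strictly positive on $(y_*, 1)$. Since $y^{d-5} > 0$ on $(0,1)$, the same sign pattern holds for $F'''$.

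From this I would run two integration steps. The sign of $F'''$, together with $F''(0) > 0$ and $F''(1) = 0$, forces $F''$ to decrease strictly on $(0, y_*)$ to a negative minimum $F''(y_*)$, then increase strictly on $(y_*, 1)$ back to $0$; consequently $F''$ has a unique zero $y_1 \in (0, y_*)$, with $F'' > 0$ on $(0, y_1)$ and $F'' < 0$ on $(y_1, 1)$. Combining this sign pattern with the equalities $F'(0) = F'(1) = 0$ then implies $F' > 0$ on $(0, 1)$: on $(0,y_1)$ the function $F'$ rises strictly from the value $0$ at the origin, and on $(y_1,1)$ it decreases strictly to the value $0$ at $y=1$. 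Hence $F$ is strictly increasing on $[0, 1]$ and, since $F(1) = 0$, we conclude $F(y) < 0$ on $(0, 1)$, which is the stated negativity of $2\alpha+\beta$.

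The main obstacle I anticipate is the boundary case $d = 6$: here $y_{**}$ coincides with $1$ itself, so $F'''(1) = 0$ and the sign statement for $F'''$ degenerates at the right endpoint of $(y_*, 1)$. Strict positivity of $F'''$ on the open interval $(y_*, 1)$ does still hold by concavity of $G$, and this is enough to drive the first integration step, but the argument there must be phrased carefully. All the remaining ingredients — the closed-form expansion of $F$, the factorisation of its third derivative, and the evaluation of $G$ at the endpoints — are routine polynomial manipulations.
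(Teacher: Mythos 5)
Your proof is correct, and it follows the same general strategy as the paper --- differentiate $2\alpha+\beta$ until the sign of the resulting expression can be read off, then integrate back using the boundary values at $y=0$ and $y=1$ --- but the execution differs in a meaningful way. The paper studies the quantity $h(y)=\frac{1}{y}\frac{\partial(2\alpha+\beta)}{\partial y}$ rather than the raw successive derivatives: dividing by $y$ before differentiating again collapses your two remaining differentiation steps into one, and the resulting derivative factors as $(d-1)(d-2)y^{d-5}(1-y)(12-3d+dy)$, whose last factor is linear in $y$ and manifestly negative on $]0,1[$ when $d\geq6$. Hence $h$ is strictly decreasing with $h(1)=0$, so $h>0$ on $]0,1[$ and $2\alpha+\beta$ is strictly increasing, with no interior sign change to track. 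Your route, going all the way to $F'''$, ends with a quadratic $G$ that genuinely changes sign at $y_*\in(0,1)$, which forces the extra bookkeeping of the non-monotone $F''$, its interior zero $y_1$, and the degenerate endpoint behaviour $F'''(1)=0$ at $d=6$. You handle all of this correctly --- the identities $F(1)=F'(0)=F'(1)=F''(1)=0$, $F''(0)=2(d-1)$, $G(0)=-3(d-3)(d-4)<0$ and $G(1)=2(d-6)$ all check out, and strict positivity of $G$ on the open interval $(y_*,1)$ is indeed enough to drive the first integration step --- but the paper's division-by-$y$ device is what keeps its version of the argument short and monotone throughout.
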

\begin{proof}
One obtains from the expressions of $\alpha$ and $\beta$ that
$$
2\alpha+\beta=4\!\left[1-y^{d-1}\right]-(d-1)(1-y)\!\left[1+y+(3-y)y^{d-2}\right]\!\mbox{.}
$$

As an immediate consequence,
\begin{equation}\label{HS.sec.3.prop.3.eq.1}
\frac{1}{y}\frac{\partial2\alpha+\beta}{\partial{y}}=(d-1)\!\left[2-y^{d-4}(3d-6-4(d-2)y+dy^2)\right]\mbox{.}
\end{equation}

In turn, one obtains
$$
\frac{\partial}{\partial{y}}\!\left(\frac{1}{y}\frac{\partial2\alpha+\beta}{\partial{y}}\right)=(d-1)(d-2)y^{d-5}(1-y)(12-3d+dy)\mbox{.}
$$

If $d\geq6$, that derivative is negative. Since (\ref{HS.sec.3.prop.3.eq.1}) is a continuous function of $y$ on the interval $]0,1]$, and since it vanishes when $y=1$, it must be positive on $]0,1[$. Therefore, $2\alpha+\beta$ is a strictly increasing function of $y$ on the interval $]0,1[$. Since that function is continuous on $]0,1]$ and equal to $0$ when $y=1$, this shows that $2\alpha+\beta$ is negative when $y$ belongs to $]0,1[$. 
\end{proof}

\begin{prop}\label{HS.sec.3.prop.3}
If $d\geq6$, then $\alpha+\beta+\gamma$ is negative.
\end{prop}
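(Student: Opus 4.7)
The plan is to show that $g(y) := \alpha+\beta+\gamma$ admits a factorization $g(y) = -y(1-y)^3 H(y)$ with $H(y) > 0$ on $[0,1)$, which immediately yields the desired negativity on $(0,1)$. First I would distribute and collect like terms in the definitions of $\alpha$, $\beta$, $\gamma$ to obtain
$$g(y) = (5-d)y + (d-3)y^2 - 2(d-1)y^{d-2} + (3d-3)y^{d-1} - (d+3)y^d + 2y^{d+1}\mbox{.}$$
Direct evaluation gives $g(0) = 0$, and routine arithmetic at $y=1$ shows $g(1) = g'(1) = g''(1) = 0$, so $y(1-y)^3$ divides $g(y)$.

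Carrying out the division (for instance by synthetic division at $y = 1$ applied three times) should produce
$$H(y) = y^{d-6}\bigl[2y^3 - (d-3)y^2 + (d-5)\bigr] + \sum_{k=0}^{d-7}(k+1)(d-5-k)\,y^k\mbox{,}$$
where the sum is empty when $d = 6$. The critical algebraic identity is
$$2y^3 - (d-3)y^2 + (d-5) = (1-y)\bigl[(d-5)(1+y) - 2y^2\bigr]\mbox{,}$$
whose right-hand factor is a concave function of $y$ taking the non-negative boundary values $d-5$ at $y=0$ and $2d-12$ at $y=1$. Since a concave function lies above its chord, this factor is non-negative on $[0,1]$, hence $y^{d-6}\bigl[2y^3 - (d-3)y^2 + (d-5)\bigr] \geq 0$ on $[0,1]$. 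The coefficients $(k+1)(d-5-k)$ of the remaining sum are strictly positive, so that sum is also non-negative on $[0,1]$ and equals $d-5 > 0$ at $y = 0$ whenever $d \geq 7$.

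Putting the pieces together: for $d = 6$, $H(y)$ reduces to $(1-y)^2(1+2y)$, which is positive on $[0,1)$; for $d \geq 7$, $H(0) = d - 5 > 0$ and both summands are strictly positive on $(0,1)$, giving $H(y) > 0$ on $[0,1)$. Combined with $-y(1-y)^3 < 0$ on $(0,1)$, this yields $g(y) < 0$ on $(0,1)$. The main obstacle is spotting the correct decomposition of $H(y)$: three iterated synthetic divisions produce coefficients that initially appear unstructured, and some care is needed to recognize the split into $y^{d-6}$ times the cubic $2y^3 - (d-3)y^2 + (d-5)$ plus a tail polynomial with positive coefficients, together with the factorization $(1-y)\bigl[(d-5)(1+y) - 2y^2\bigr]$ of that cubic. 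Once this is in hand, the remaining analysis is elementary.
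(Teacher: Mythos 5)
Your proof is correct, and the key identity checks out: with $g=\alpha+\beta+\gamma$ expanded as you state (I verified the coefficients, and $g(1)=g'(1)=g''(1)=0$), one indeed has $(1-y)^3H(y)=-g(y)/y$ --- the boundary terms $-(d-5)y^{d-6}+3(d-5)y^{d-5}-2(d-6)y^{d-4}$ produced by $(1-y)^3\sum_{k=0}^{d-7}(k+1)(d-5-k)y^k$ cancel exactly against the low-order terms of $(1-y)^3y^{d-6}\bigl[2y^3-(d-3)y^2+(d-5)\bigr]$ --- and your sign analysis of $H$ is sound. Your route is genuinely different from the paper's. The paper works with the ratio $(\alpha+\beta+\gamma)/y$, observes that it and its first derivative vanish at $y=1$, and reduces everything to showing that its second derivative is negative on $]0,1[$; that second derivative factors as $(d-1)(1-y)y^{d-5}(Ay^2+By+C)$ with $A=-2d$, $B=d^2-d-6$, $C=-2d^2+14d-24$, and the sign of the quadratic is controlled via $2A+B=(d+1)(d-6)\geq0$ and $A+B+C=-(d-5)(d-6)\leq0$. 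The paper's method is uniform with the proofs of the companion statements on $\alpha$ and $2\alpha+\beta$ and requires no guesswork, whereas yours yields a closed-form factorization $g=-y(1-y)^3H(y)$ with $H$ manifestly positive, which makes the inequality transparent once the factorization is found and also exposes the exact order of vanishing at $y=1$. The only cost of your approach is that the decomposition of $H$ must be guessed and then verified, but the verification is a finite polynomial identity, so there is no gap.
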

\begin{proof}
One obtains from the expressions of $\alpha$, $\beta$, and $\gamma$ that
\begin{equation}\label{HS.sec.3.prop.3.eq.1}
\frac{\alpha+\beta+\gamma}{y}=-(d-5)+(d-3)y+(y-2)y^{d-3}\!\left[(d-1)(1-y)+2y^2\right]\!\mbox{.}
\end{equation}

That ratio is a continuous function of $y$ on $]0,1]$ that is equal to $0$ when $y=1$. As a consequence, it suffices to show that its derivative,
\begin{multline*}
\displaystyle\frac{\partial}{\partial{y}}\!\left(\frac{\alpha+\beta+\gamma}{y}\right)=d-3\\
\hfill+y^{d-4}\!\left[-(d-1)\left(2(d-3)-3(d-2)y+(d+3)y^2\right)+2dy^3\right]\!\mbox{,}
\end{multline*}
is positive when $0<y<1$. Again, observe that this derivative is a continuous function of $y$ on $]0,1]$ and that it vanishes when $y=1$. Therefore, it is sufficient to show that the second derivative of the ratio (\ref{HS.sec.3.prop.3.eq.1}) with respect to $y$ is negative when $0<y<1$. That second derivative is
$$
\displaystyle\frac{\partial^2}{\partial{y^2}}\!\left(\frac{\alpha+\beta+\gamma}{y}\right)=(d-1)(1-y)y^{d-5}(Ay^2+By+C)\mbox{,}
$$
where $A$, $B$, and $C$ are defined as
$$
\left\{
\begin{array}{l}
A=-2d\mbox{,}\\
B=-6-d+d^2\mbox{,}\\
C=-24+14d-2d^2\mbox{.}\\
\end{array}
\right.
$$

Observe that
$$
2A+B=(d+1)(d-6)\mbox{.}
$$

In particular, $2A+B$ is non-negative when $d\geq6$. As in addition $A$ is negative, $2Ay+B$ is necessarily positive when $0<y<1$. This implies that $Ay^2+By+C$ is a strictly increasing function of $y$ on $]0,1[$. Finally,
$$
A+B+C=-(d-5)(d-6)\mbox{.}
$$

That sum is equal to zero when $d=6$ and negative when $d\geq7$. As a consequence, $Ay^2+By+C$ is negative when $0<y<1$, and so is the second derivative of (\ref{HS.sec.3.prop.3.eq.1}) with respect to $y$, as desired.
\end{proof}

\section{Deeper sections of the hypercube}\label{HS.sec.4}

Consider a closed, $d$-dimensional ball of radius $t$ centered at the center of $[0,1]^d$ and such that the vertices of $[0,1]^d$ are outside of $B$. Recall that the interior of $B$ contains the centers of the $k$-dimensional faces of $[0,1]^d$ when
$$
t>\frac{\sqrt{d-k}}{2}\mbox{.}
$$

Consider a hyperplane $H$ of $\mathbb{R}^d$ tangent to $B$ and identify $\mathbb{R}^{d-1}$ with the subspace of $\mathbb{R}^d$ spanned by the first $d-1$ coordinates. The strategy of the proof exposed here in the case when $k=2$ is the following. The first step consists in finding the maximal value for the volume of $H\cap[0,1]^{d-1}$ when the line orthogonal to $H$ through the origin of $\mathbb{R}^d$ is contained in $\mathbb{R}^{d-1}$. This bit of the proof can be thought of as an induction on $k$ because, in this case, $B$ contains the centers of the $(k-1)$-dimensional faces of $[0,1]^{d-1}$ in its interior. By Proposition~\ref{HS.sec.2.prop.1} and the results on the regularity of the $(d-1)$-dimensional volume of $H\cap[0,1]^d$ presented in Section \ref{HS.sec.1}, it is then shown that the largest value for that volume is attained at a critical point of the corresponding Lagrangian function. The necessary conditions given by the Lagrange multipliers theorem reduce the search of that critical point to showing that a certain quadratic equation, whose coefficients are polynomials (of degree about $d$) in the coordinates of a vector orthogonal to $H$, cannot have a solution in the interval $[1,+\infty[$.

Since the regularity results of Section \ref{HS.sec.1} are valid for any $k$ less than $d$, it is tempting to carry on with the inductive process in order to extend the result to greater values of $k$, above a certain dimension. However, two main difficulties arise. The first of these difficulties is combinatorial: the number of ways $H$ cuts the hypercube quickly increases with $k$. For instance, when
$$
\frac{\sqrt{d-3}}{3}<t<\frac{\sqrt{d-2}}{2}\mbox{,}
$$
the vertex set of $[0,1]^d$ can still be split by $H$ in the two ways studied above where either exactly one or exactly two (adjacent) vertices of $[0,1]^d$ are on one of the sides of $H$, but it can be split in three more ways. More precisely, there can also be exactly three or exactly four vertices on one of the sides of $H$. In the former case, these vertices are necessarily three of the four vertices of a square face of $[0,1]^d$. In the latter case, they are either the four vertices of a square face of $[0,1]^d$ or a vertex of $[0,1]^d$ and three of the vertices adjacent to it. Theorem \ref{HS.sec.1.thm.2} provides a different volume formula in each case, and the Lagrange multipliers strategy needs to be applied to each of them.

The second difficulty is analytic. Indeed, locating the solutions of the above mentioned quadratic equation gets much more complicated as the polynomial coefficients of that equation become multivariate and the degree with respect to each variable is about $d$. In particular, generalizing Proposition \ref{HS.sec.3.prop.1} to larger values of $k$ would require bounding linear combinations of these polynomials, which, using a Lagrange multipliers approach would mean solving parametric systems of polynomial equations of degree about $d$ in each variable, in order to find the critical points of the Lagrangian function. Finally, it is shown in \cite{Konig2021,KonigKoldobsky2011} that, when $d\leq{4}$, the $(d-1)$-dimensional volume of $H\cap[0,1]^d$ changes behaviors as $k$ increases and becomes locally minimal when $H$ is orthogonal to a diagonal of $[0,1]^d$ instead of locally maximal. A similar change is likely to happen in higher dimensions, possibly causing further difficulties in the analysis of the problem. In this case, it would be interesting to estimate, for each dimension, the smallest value of $k$ for which that change takes place.

\bibliography{HypercubeSections}

\providecommand{\MR}{\relax\ifhmode\unskip\space\fi MR }
\providecommand{\MRhref}[2]{%
  \href{http://www.ams.org/mathscinet-getitem?mr=#1}{#2}
}
\providecommand{\href}[2]{#2}
\begin{thebibliography}{10}

\bibitem{Ball1986}
Keith Ball, \textsl{Cube slicing in $\mathbb{R}^n$}, Proceedings of the
  American Mathematical Society \textbf{97} (1986), 465--473.

\bibitem{BarrowSmith1979}
David~L. Barrow and Philip~W. Smith, \textsl{Spline notation applied to a
  volume problem}, American Mathematical Monthly \textbf{86} (1979), 50--51.

\bibitem{Berger2010}
Marcel Berger, \textsl{Geometry revealed, a {Jacob's} ladder to modern higher
  geometry}, Springer, 2010.

\bibitem{DezaHiriart-UrrutyPournin2021}
Antoine Deza, Jean-Baptiste Hiriart-Urruty and Lionel Pournin,
  \textsl{Polytopal balls arising in optimization}, Contributions to Discrete
  Mathematics (to appear).

\bibitem{FrankRiede2012}
Rolfdieter Frank and Harald Riede, \textsl{Hyperplane sections of the
  $n$-dimensional cube}, American Mathematical Monthly \textbf{119} (2012),
  868--872.

\bibitem{GabrielovGelfandLosik1975}
Andrei~M. Gabri{\'e}lov, Izrail~M. Gel'fand and Mark~V. Losik,
  \textsl{Combinatorial computation of characteristic classes}, Functional
  Analysis and its Applications \textbf{9} (1975), 103--115.

\bibitem{GelfandGoreskyMacPhersonSerganova1987}
Izrail~M. Gel'fand, Mark Goresky, Robert~D. MacPherson, and Vera Serganova,
  \textsl{Combinatorial geometries, convex polyhedra and schubert cells},
  Advances in Mathematics \textbf{63} (1987), 301--316.

\bibitem{Hensley1979}
Douglas Hensley, \textsl{Slicing the cube in ${R}^n$ and probability},
  Proceedings of the American Mathematical Society \textbf{73} (1979), 95--100.

\bibitem{Koldobsky2005}
Alexander Koldobsky, \textsl{Fourier analysis in convex geometry}, American
  Mathematical Society, 2005.

\bibitem{Konig2021}
Hermann K{\"o}nig, \textsl{Non-central sections of the simplex, the
  cross-polytope and the cube}, Advances in Mathematics \textbf{376} (2021),
  107458.

\bibitem{KonigKoldobsky2011}
Hermann K{\"o}nig and Alexander Koldobsky, \textsl{Volumes of low-dimensional
  slabs and sections in the cube}, Advances in Applied Mathematics \textbf{47}
  (2011), 894--907.

\bibitem{Laplace1886}
Pierre-Simon Laplace, \textsl{{\OE{}}uvres compl{\`e}tes}, vol.~7,
  Gauthier-Villars, 1886.

\bibitem{LiuTkocz2020}
Ruoyuan Liu and Tomasz Tkocz, \textsl{A note on extremal noncentral sections of
  the cross-polytope}, Advances in Applied Mathematics \textbf{118} (2020),
  102031.

\bibitem{MeyerPajor1988}
Mathieu Meyer and Alain Pajor, \textsl{Sections of the unit ball of $l_p^n$},
  Journal of Functional Analysis \textbf{80} (1988), 109--123.

\bibitem{MoodyStoneZachZvavitch2013}
James Moody, Corey Stone, David Zach, and Artem Zvavitch, \textsl{A remark on
  extremal non-central sections of the unit cube}, Asymptotic geometric
  analysis, Fields Institute Communications, vol.~68, Springer, 2013,
  pp.~211--228.

\bibitem{Polya1913}
Georg P{\'o}lya, \textsl{Berechnung eines bestimmten {Integrals}},
  Mathematische Annalen \textbf{74} (1913), 204--212.

\bibitem{Stanley1977}
Richard Stanley, \textsl{Eulerian partitions of a unit hypercube}, Higher
  Combinatorics (Martin Aigner, ed.), Reidel, 1977, p.~49.

\bibitem{Webb1996}
Simon Webb, \textsl{Central slices of the regular simplex}, Geometriae Dedicata
  \textbf{61} (1996), 19--28.

\bibitem{Zong2006}
Chuanming Zong, \textsl{The cube, a window to convex and discrete geometry},
  Cambridge University Press, 2006.

\end{thebibliography}
\bibliographystyle{ijmart}

\end{document}